  \newcommand{\noCol}[1]{}
\newcommand{\fvs}{{\rm fvs}}
\newcommand{\be}{\begin{enumerate}}
\newcommand{\ee}{\end{enumerate}}
\newcommand{\bd}{\begin{description}}
\newcommand{\ed}{\end{description}}
\newcommand{\beq}{\begin{equation}}
\newcommand{\eeq}{\end{equation}}
\newcommand{\2}{\vspace{2mm}}
\renewenvironment{proof}[1][]{\par \noindent {\bf Proof#1}.\ }{\hfill$\Box$
\par \vspace{11pt}}
\newtheorem{theorem}{Theorem}[section]
\newtheorem{lemma}[theorem]{Lemma}
\newtheorem{corollary}[theorem]{Corollary}
\newtheorem{claim}{Claim}
\theoremstyle{definition}
\newtheorem{conjecture}[theorem]{Conjecture}
 \newcommand{\WithoutColor}[1]{}  \newcommand{\redC}[1]{red} \newcommand{\blueC}[1]{blue}  
\begin{document}
\bibliographystyle{plain}

\title{Feedback vertex sets of digraphs with bounded maximum degree}
\author{Jiangdong Ai\thanks{School of Mathematical Sciences and LPMC, Nankai University. {\tt jd@nankai.edu.cn}.}
\hspace{2mm}
Gregory Gutin\thanks{Department of Computer Science, Royal Holloway University of London, {\tt g.gutin@rhul.ac.uk}, and School of Mathematical Sciences and LPMC, Nankai University.}
\hspace{2mm} 
Xiangzhou Liu\thanks{School of Mathematical Sciences and LPMC, Nankai University. {\tt  
i19991210@163.com}.}
\hspace{2mm} Anders Yeo\thanks{Department of Mathematics and Computer Science, University of Southern Denmark. {\tt yeo@imada.sdu.dk}, and Department of Mathematics, University of Johannesburg.}
\hspace{2mm} Yacong Zhou\thanks{Shenzhen Institutes of Advanced Technology, Chinese Academy of Sciences. {\tt yacong.zhou96@gmail.com}}
}
\date{\today}
\maketitle
\begin{abstract}
A digraph $D$ is an oriented graph if $D$ does not have a pair of opposite arcs. The degree of a vertex $v$ of $D$ is the sum of the in-degree and out-degree of $v.$ Let $\fvs(D)$ be the minimum number of vertices whose deletion from $D$ makes it acyclic. 
Let $D$ be a digraph with $n$ vertices and maximum degree $\Delta$. We prove the following bounds. If $D$ is an oriented graph, then $\fvs(D)\leq \frac{3n}{7}$ when $\Delta\le 4$ and $\fvs(D)\leq \frac{n}{2}$ when $\Delta\le 5$. If $D$ is a connected digraph, $\Delta\le 4$ and $D$ is not obtained from an odd undirected cycle by replacing every edge with the pair of opposite arcs with the same endvertices, then $\fvs(D)\leq \frac{n}{2}$. If $D$ is an arbitrary digraph with $\Delta\le 5$ then $\fvs(D)\leq \frac{2n}{3}.$
Note that all the above bounds are tight.
\end{abstract}

\section{Introduction}
In this paper, the standard terminology and notation in digraph theory are used, following \cite{BJG}. 
Some important terminology and notation are given at the end of this section.  

A set $F\subseteq V(D)$ of a digraph $D$, is a {\em feedback vertex set} if $D-F$ is acyclic. Similarly, one defines a {\em feedback arc set}.
Deciding whether a graph has a feedback vertex set of a given size is among the 21 original NP-complete problems of Karp~\cite{K72}. Cavallaro and Fluschnik~\cite{Hamiltonian} proved that  the problem remains NP-hard even on subclasses of Hamiltonian graphs. Thus, finding the minimum size of a feedback vertex set of a digraph $D$, denoted by $\fvs(D)$ is a challenging algorithmic problem and was extensively studied in the literature.

In particular, some scholars studied the upper bounds on $\fvs(D)$ for arbitrary digraphs $D$. Akbari, Ghodrati, Jabalameli and Saghafian \cite{AGJS} obtained a lower bound on $|V(D)|-\fvs(D)$ inspired by the well-known Caro-Wei lower bound for the maximum size of an independent set in an undirected graph. This bound was improved in two different ways by Asgarli, Falkenhagen and Hoshi \cite{AFH}.

To obtain tight or close-to-tight lower bounds on $\fvs(D)$, many researchers studied such bounds for special classes of {\em oriented graphs} (or, {\em orgraphs}), i.e., digraphs without opposite arcs. We will first discuss new results obtained in this paper and then we will overview numerous results obtained by other researchers.

Several scholars carried out research on tight upper bounds for the minimum size of a feedback arc set in orgraphs with small maximum degree, see, e.g. \cite{BergerShor1990,BergerShor1997,EadesLinSmyth1993,Hanauer2017Thesis,HanauerBrandenburgAuer2013,gutinFAS}. In particular, the authors of \cite{gutinFAS} obtained tight upper bounds when the maximum degree $\Delta\le 4$ and when $\Delta\le 5.$ The authors of \cite{gutinFAS} proved a conjecture in \cite{Hanauer2017Thesis} and disproved another one in \cite{Hanauer2017Thesis}, both for $\Delta\le 5.$ 
In this paper, similarly, we study tight upper bounds for the minimum size of a feedback vertex set in orgraphs with small bounded maximum degree $\Delta$. Clearly, the case of $\Delta\le 2$ is trivial. In the case of $\Delta\le 3$, the tight bound $\fvs(D) \le \frac{n}{3}$ for oriented graphs was proved in \cite{Hanauer2017Thesis}, and for digraphs the bound $\fvs(D) \le \frac{n}{2}$ follows by induction: delete both vertices of a 2-cycle and use the induction hypothesis, but use only one vertex of the 2-cycle in feedback vertex set (due to possible existence of 2-cycles the bound is tight). 
We establish new tight upper bounds on the minimum feedback vertex set size in directed and oriented graphs with $\Delta\le 4$ and $\Delta\le 5$. Specifically, we prove for $\Delta\le 4$ that any orgraph $D$ on $n$ vertices  satisfies $\fvs(D) \le \frac{3n}{7}$, and if $D$ is a connected digraph not obtained from an odd undirected cycle by replacing every edge with the pair of opposite arcs with the same endvertices, then $\fvs(D) \le \frac{n}{2}$. For $\Delta\le 5$, we show $\fvs(D) \le \frac{n}{2}$ for an orgraph $D$, and $\fvs(D) \le \frac{2n}{3}$ for a digraph $D$. Moreover, we provide explicit constructions that demonstrate that all of these bounds are tight. 

Tournaments form a well-known class of orgraphs. Already Stearns~\cite{S59} and Erd\H{o}s and Moser~\cite{EM64} have shown that for any tournament $T$ on $n$ vertices, we have $\fvs(T)\le n-\lfloor\log_2n\rfloor-1$, while there are tournaments where there is no feedback vertex set with less than $n-2\lfloor\log_2n\rfloor-1$ vertices. More precise bounds for small values of $n$ have been obtained by Sanchez-Flores~\cite{SF94,SF98} and recently more work has been done in that direction by Neiman, Mackey and Heule~\cite{NMH20} and by Lidick\'y and Pfender~\cite{LP2021}. Improving the asymptotic upper and lower bounds remains an open problem.

Another class that has received a lot of attention is planar orgraphs.
Let $D_{g}^{n}$ denote a planar digraph with $n$ vertices and directed girth $g$. Harutyunyan ~\cite{Har11,HM15} conjectured that $\fvs(D_{3}^{n})\le \frac{2n}5$. This conjecture was refuted by Knauer, Valicov and Wenger~\cite{KVW17}, who showed that $\fvs(D_{g}^{n})\ge \frac{n-1}{g-1}$ for all $g\ge 3$. On the other hand,  Golowich and Rolnick~\cite{GR15} established the upper bounds $\fvs(D_{4}^{n})\le \frac{7n}{12}$, $\fvs(D_{5}^{n})\le \frac{8n}{15}$, and $\fvs(D_{g}^{n})\le \frac{3n-6}{g}$
for all $g\ge 6$. Harutyunyan and Mohar~\cite{HM15} proved
that the vertex set of every planar orgraph of girth at least 5 can be partitioned into two acyclic subgraphs. This result was extended to planar orgraphs of girth 4 by Li and Mohar~\cite{LM17}, and therefore $\fvs(D_{4}^{n})\le \frac{n}2$. Further improvements were obtained by Esperet, Lemoine, and Maffray~\cite{ELM17},  who showed $\fvs(D_{4}^{n})\le \tfrac{5n-5}9$, $\fvs(D_{5}^{n})\le \tfrac{2n-5}4$, and $\fvs(D_{g}^{n})\le \tfrac{2n-6}{g}$ if $g\ge 6$. These improved results of Golowich and Rolnick. However, a question of Albertson~\cite{W06,M02} asking whether any planar orgraph has a feedback vertex set on at most half its vertices, remains open. Note that this question is a weakening of the undirected setting as well as of the famous Neumann-Lara conjecture~\cite{NL85}. Further, it is known that if true this bound is best-possible~\cite{KVW17}. Moreover, note that the best known upper bound coincides with the above mentioned $\frac{3}{5}n$ from the undirected setting~\cite{B76}.

Knauer, La and Valicov \cite{KLV} studied the minimum size of a feedback vertex set in oriented and undirected $n$-vertex graphs of given degeneracy or treewidth (for orgraphs, both parameters are undirected, i.e. they are of the underlying graphs of orgraphs). Since in this paper we only study digraphs, we will discuss only the results in \cite{KLV} obtained for orgraphs. For an orgraph $D$ of degeneracy $k\ge 2$, Knauer, La and Valicov proved that $\fvs(D)\le \frac{k-1}{k+1}n$ and that this inequality is strict when $k$ is odd.
To compare our new results with those in \cite{KLV}, note that orgraphs with the maximum degree $\Delta$ are of degeneracy $\Delta$. For $\Delta\le 4,$ while our bound is $\fvs(D) \le \frac{3n}{7}$, the bound of \cite{KLV} is $\fvs(D) \le \frac{3n}{5}.$ For $\Delta\le 5,$ while our bound is $\fvs(D) \le \frac{n}{2}$, the bound of \cite{KLV} is $\fvs(D) \le \frac{2n}{3}.$ Thus, our results for orgraphs with $\Delta\le 4$ and $\Delta\le 5$
do not follow from the relevant results in \cite{KLV}. 
For an orgraph $D$ of treewidth $k\ge 2$, Knauer, La and Valicov proved that $\fvs(D)\le  \frac{k}{k+3}n$. It seems that this bound is unlikely to be tight. 


\paragraph{Some Terminology and Notation}
Let $D$ be a digraph and let $v \in V(D)$. The out-degree
(in-degree, respectively) is denoted $d^+_D(v)$ ($d^-_D(v)$, respectively). Recall that the {\em degree} of $v$ is $d_D(v) = d^+_D(v) + d^-_D(v)$. The maximum degree $\Delta(D)$ of $D$ is defined as $\Delta(D) = \max_{v \in V(D)} d_D(v)$. A digraph $D$ is {\em $k$-regular} if $d^+_D(v) =d^-_D(v)=k$ for every vertex $v\in V(D)$. 

The {\em order} of a directed or undirected graph $H$ is the number of vertices in $H$.	In a digraph, a {\em cycle} ({\em path}, respectively)
is a directed cycle (directed path, respectively). 	A {\em $k$-cycle} is a
cycle with $k$ vertices. The {\em directed girth} of a digraph $D$ is the minimum $k$ for which $D$ has a $k$-cycle. In particular, an orgraph $D$ is of directed girth at least 3. 

The {\em underlying graph} of a digraph $D$, is
the undirected graph $UG(D)$  with the same vertex set as $D$ and such that
a pair $u,v$ of distinct vertices are adjacent in $UG(D)$ if there is an arc
between $u$ and $v$ in $D.$ A component of $UG(D)$ is a {\em component} of
$D$ and $D$ is {\em connected} if $UG(D)$ is connected.

\section{Oriented and Directed Graphs with $\Delta\leq 4$}

In Subsections \ref{4orient} and \ref{4direct}, we obtain sharp upper bounds for oriented and directed graphs, respectively, with maximum degree at most 4.

\subsection{Orgraphs}\label{4orient}

We will define a class ${\cal H}$ of digraphs  as follows.  Let $H_1$ be the $3$-cycle (i.e. $V(H_1)=\{x,y,z\}$ and $A(H_1)=\{xy,yz,zx\}$), let $H_2$ be defined by 
$V(H_2)=\{x,y,z,a,b\}$ and $A(H_2)=\{xy,yz,zx,ab,bx,xa,bz,za\}$, and let $H_3$ be defined by
$V(H_3)=\{x,y,z,a,b\}$ and $A(H_3)=\{xy,yz,zx,ab,bx,ya,bz,za\}$ (see Figure~\ref{H-digraphs}).  Let ${\cal H}$ contain the digraphs $H_1$, $H_2$, $H_3$ and all digraphs that can be obtained by taking two 
digraphs in ${\cal H}$ (they may or may not be the same digraph) and adding an arc between them. The class ${\cal H}$ contains exactly the digraphs that can be obtained
by this process (see Figure~\ref{H-example}).

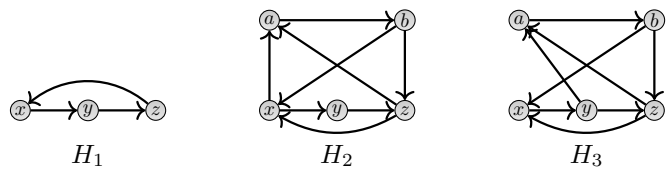
\begin{figure}[htb]
\begin{center}
\tikzstyle{vertexX}=[circle, draw, top color=black!20, bottom color=black!10, minimum size=10pt, scale=0.75, inner sep=0.8pt]
\begin{tikzpicture}[scale=0.3]
  \node at (4,-1) {$H_1$};
  \node (x) at (1,1) [vertexX]{$x$}; 
  \node (y) at (4,1) [vertexX]{$y$};    
  \node (z) at (7,1) [vertexX]{$z$};    
  \draw[->, line width=0.03cm] (x) -- (y);
  \draw[->, line width=0.03cm] (y) -- (z);
  \draw[->, line width=0.03cm] (z) to [out=140, in=40] (x);
\end{tikzpicture} \hspace{1cm}
\begin{tikzpicture}[scale=0.3]
  \node at (4,-1) {$H_2$};
  \node (x) at (1,1) [vertexX]{$x$};
  \node (y) at (4,1) [vertexX]{$y$};
  \node (z) at (7,1) [vertexX]{$z$};

  \node (a) at (1,5) [vertexX]{$a$};
  \node (b) at (7,5) [vertexX]{$b$};
  \draw[->, line width=0.03cm] (x) -- (y);
  \draw[->, line width=0.03cm] (y) -- (z);
  \draw[->, line width=0.03cm] (z) to [out=210, in=330] (x);
  \draw[->, line width=0.03cm] (a) -- (b);
  \draw[->, line width=0.03cm] (b) -- (x);
  \draw[->, line width=0.03cm] (b) -- (z);
  \draw[->, line width=0.03cm] (x) -- (a);
  \draw[->, line width=0.03cm] (z) -- (a);
\end{tikzpicture} \hspace{1cm}
\begin{tikzpicture}[scale=0.3]
  \node at (4,-1) {$H_3$};
  \node (x) at (1,1) [vertexX]{$x$};
  \node (y) at (4,1) [vertexX]{$y$};
  \node (z) at (7,1) [vertexX]{$z$};

  \node (a) at (1,5) [vertexX]{$a$};
  \node (b) at (7,5) [vertexX]{$b$};
  \draw[->, line width=0.03cm] (x) -- (y);
  \draw[->, line width=0.03cm] (y) -- (z);
  \draw[->, line width=0.03cm] (z) to [out=210, in=330] (x);
  \draw[->, line width=0.03cm] (a) -- (b);
  \draw[->, line width=0.03cm] (b) -- (x);
  \draw[->, line width=0.03cm] (b) -- (z);
  \draw[->, line width=0.03cm] (y) -- (a);
  \draw[->, line width=0.03cm] (z) -- (a);
\end{tikzpicture} 
\caption{The digraphs $H_1$, $H_2$ and $H_3$.}
\label{H-digraphs}
\end{center}
\end{figure}

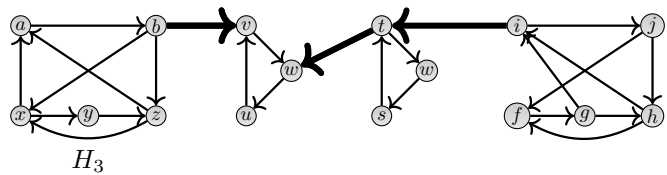
\begin{figure}[htb]
\begin{center}
\tikzstyle{vertexX}=[circle, draw, top color=black!20, bottom color=black!10, minimum size=10pt, scale=0.75, inner sep=0.8pt]
\begin{tikzpicture}[scale=0.3]
  \node (x) at (1,1) [vertexX]{$x$};
  \node (y) at (4,1) [vertexX]{$y$};
  \node (z) at (7,1) [vertexX]{$z$};

  \node (a) at (1,5) [vertexX]{$a$};
  \node (b) at (7,5) [vertexX]{$b$};
  \draw[->, line width=0.03cm] (x) -- (y);
  \draw[->, line width=0.03cm] (y) -- (z);
  \draw[->, line width=0.03cm] (z) to [out=210, in=330] (x);
  \draw[->, line width=0.03cm] (a) -- (b);
  \draw[->, line width=0.03cm] (b) -- (x);
  \draw[->, line width=0.03cm] (b) -- (z);
  \draw[->, line width=0.03cm] (x) -- (a);
  \draw[->, line width=0.03cm] (z) -- (a);

  \node (x1) at (11,1) [vertexX]{$u$};
  \node (y1) at (11,5) [vertexX]{$v$};
  \node (z1) at (13,3) [vertexX]{$w$};
  \draw[->, line width=0.03cm] (x1) -- (y1);
  \draw[->, line width=0.03cm] (y1) -- (z1);
  \draw[->, line width=0.03cm] (z1) -- (x1);

  \node (x2) at (17,1) [vertexX]{$s$};
  \node (y2) at (17,5) [vertexX]{$t$};
  \node (z2) at (19,3) [vertexX]{$w$};
  \draw[->, line width=0.03cm] (x2) -- (y2);
  \draw[->, line width=0.03cm] (y2) -- (z2);
  \draw[->, line width=0.03cm] (z2) -- (x2);

  \draw[->, line width=0.08cm] (b) -- (y1);
  \draw[->, line width=0.08cm] (y2) -- (z1);

  \node at (4,-1) {$H_3$};
  \node (x3) at (23,1) [vertexX]{$f$};
  \node (y3) at (26,1) [vertexX]{$g$};
  \node (z3) at (29,1) [vertexX]{$h$};

  \node (a3) at (23,5) [vertexX]{$i$};
  \node (b3) at (29,5) [vertexX]{$j$};
  \draw[->, line width=0.03cm] (x3) -- (y3);
  \draw[->, line width=0.03cm] (y3) -- (z3);
  \draw[->, line width=0.03cm] (z3) to [out=210, in=330] (x3);
  \draw[->, line width=0.03cm] (a3) -- (b3);
  \draw[->, line width=0.03cm] (b3) -- (x3);
  \draw[->, line width=0.03cm] (b3) -- (z3); 
  \draw[->, line width=0.03cm] (y3) -- (a3);
  \draw[->, line width=0.03cm] (z3) -- (a3);

  \draw[->, line width=0.08cm] (b) -- (y1);
  \draw[->, line width=0.08cm] (y2) -- (z1);
  \draw[->, line width=0.08cm] (a3) -- (y2);

\end{tikzpicture} 
\caption{A digraph in ${\cal H}$ (where the thick arcs are bridges).}
\label{H-example}
\end{center}
\end{figure}

For a digraph $D$, let $h(D)$ denote the number of connected components in $D$ that belong to ${\cal H}$. We will show the following theorem.
\begin{theorem} \label{main4}
  Let $D$ be an oriented graph with maximum degree at most $4$. Then, we have
\[
\fvs(D) \leq \frac{|V(D)|+|A(D)| + h(D)}{7}
\]
\end{theorem}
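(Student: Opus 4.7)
The approach I would take is strong induction on $|V(D)|$, treating $w(D) := |V(D)| + |A(D)| + h(D)$ as a potential function that I try to reduce by at least $7$ for every vertex inserted into the feedback vertex set. The potential is calibrated so that the bound is tight on the base digraphs of ${\cal H}$: a direct verification gives $\fvs(H_1)=1=w(H_1)/7$ (with $w(H_1)=3+3+1=7$) and $\fvs(H_2)=\fvs(H_3)=2=w(H_i)/7$ (with $w(H_i)=5+8+1=14$). Moreover, if $D$ is built by placing a single bridge-arc between $D_1,D_2 \in {\cal H}$, then $|V(D)|=|V(D_1)|+|V(D_2)|$, $|A(D)|=|A(D_1)|+|A(D_2)|+1$ and $h(D)=h(D_1)+h(D_2)-1$ (two ${\cal H}$-components merge into one), so $w(D)=w(D_1)+w(D_2)$; and since a bridge lies on no directed cycle, $\fvs(D)=\fvs(D_1)+\fvs(D_2)$, so the bound propagates through ${\cal H}$ with equality. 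Each of $|V|$, $|A|$ and $h$ is additive over connected components, so the bound is additive over components too and I may assume $D$ is connected, with $D \notin {\cal H}$.

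For the inductive step I would set up a sequence of reductions, each of which either strictly decreases $|V(D)|$ and applies the induction hypothesis, or commits some vertices to the FVS with an amortised drop of $7$ in $w$ per committed vertex. First, any vertex $v$ lying on no directed cycle (in particular any source or sink) may be deleted without changing $\fvs$; here the only obligation is $w(D-v) \le w(D)$, which reduces to controlling how $h$ can jump when the removal of $v$ splits a component into pieces -- the budget $1+d(v)$ must be shown to absorb any newly created ${\cal H}$-components. Second, if $v$ is on a cycle but $d(v) \le 2$, adding $v$ to the FVS removes $1+d(v)$ from $|V|+|A|$ and typically triggers a cascade of source/sink deletions among the former neighbours of $v$, which one bundles with $v$ to meet the $7$-per-FVS-vertex budget. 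In the remaining main case the minimum degree is $\ge 3$, and one seeks a vertex $v$ (or a small set $S$) whose addition to the FVS, together with its attendant cascade, drops $w$ by $\ge 7|S|$; note that a single degree-$4$ vertex removes only $1+4=5$ from $|V|+|A|$, so the cascade of source/sink deletions it induces, or a drop in $h(D)$, must contribute the remaining $2$ units.

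The main obstacle, and the reason ${\cal H}$ is defined so carefully, lies in that last case. The key structural claim to prove is a dichotomy: either around some vertex of degree $4$ the cascade of enforced source/sink deletions is already large enough to recover a $w$-drop of $7$, or $D$ itself is forced to be in ${\cal H}$, in which case $h(D)\ge 1$ contributes exactly the missing slack on the right-hand side of the bound. Establishing this dichotomy -- essentially showing that minimum-degree-$\ge 3$ oriented graphs with $\Delta \le 4$ admitting no favourable reduction are precisely the iterated bridge-sums of $H_1,H_2,H_3$ -- appears to require a detailed local analysis of how $3$-cycles and the $H_2,H_3$ gadgets can sit inside $D$ under the maximum-degree constraint. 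I expect the argument to split into numerous subcases according to the local structure at a degree-$4$ vertex, in particular how many $3$-cycles pass through it and how its neighbours interact, with the extremal configurations in every subcase turning out to be exactly the members of ${\cal H}$.
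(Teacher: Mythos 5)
Your setup is sound as far as it goes: the potential $w(D)=|V(D)|+|A(D)|+h(D)$, the verification that the bound holds with equality on $H_1,H_2,H_3$ and propagates through bridge-sums, the additivity over components, and the easy reductions (vertices on no cycle, low-degree vertices) all match the paper's strategy of taking a minimum counterexample and peeling off local configurations. But the entire difficulty of the theorem is concentrated in the case you defer to ``a detailed local analysis,'' and the dichotomy you propose to organise that analysis is not correct. After the easy reductions one is left with a $2$-regular oriented graph (every vertex has $d^+=d^-=2$), and \emph{no} member of ${\cal H}$ is $2$-regular: every $H\in{\cal H}$ satisfies $\sum_{x}(4-d(x))\ge 4$, so it has vertices of degree at most $3$. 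The tight example for the $2$-regular case (the $7$-vertex, $2$-regular digraph with $\fvs=3$) is not an iterated bridge-sum of $H_1,H_2,H_3$. So the alternative ``or $D$ itself is forced to be in ${\cal H}$'' never fires, and the other alternative --- a cascade of source/sink deletions around a degree-$4$ vertex --- never fires either, because deleting one vertex of a $2$-regular digraph leaves every neighbour with in-degree and out-degree at least $1$, hence no cascade at all. A single FVS vertex buys a drop of only $5$ in $|V|+|A|$, and nothing in your plan supplies the missing $2$.

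The two mechanisms that actually close the induction, and that your proposal does not contain, are: (i) \emph{reducible pairs} $(X,Y)$, i.e.\ multi-vertex configurations where $X$ goes into the FVS and $Y$ is deleted for free because every cycle through $Y$ is already cut by $X$; the budget then becomes $7|X|$ against a drop of $|X|+|Y|+|A(D[X\cup Y])|+(\text{arcs leaving } X\cup Y)$, and one must exhibit such pairs with favourable ratios (the paper's final configuration is $|X|=3$, $|Y|=2$ with $16$ incident arcs, meeting the budget of $21$ exactly); and (ii) a quantitative bound on how many ${\cal H}$-components can appear in $D-(X\cup Y)$, obtained by counting the arcs between $X\cup Y$ and the rest via the degree-deficiency inequality $\sum_{x\in V(H)}(4-d(x))\ge 4$ (and $\ge 6$ unless $H\in\{H_2,H_3\}$) --- this is where $h$ really earns its place in the inductive statement, not merely as slack when $D\in{\cal H}$. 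Even granting the reducible-pair idea, ruling out the bad cases forces a long chain of structural facts about the minimal counterexample (no transitive triangles, every arc in exactly one $3$-cycle, each neighbourhood inducing a perfect matching, no $4$-cycles in the underlying graph, all $5$-cycles directed) before a contradiction emerges. None of this is derivable from the dichotomy you state, so the proof as proposed does not go through.
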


Theorem~\ref{main4} implies the following: 

\begin{corollary} \label{mainCor}
  If $D$ is an oriented graph of order $n$ with the maximum degree at most $4$, then $\fvs(D) \leq \frac{3n}{7}$.
\end{corollary}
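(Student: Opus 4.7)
The plan is to derive Corollary~\ref{mainCor} directly from Theorem~\ref{main4} by bounding the quantity $|V(D)|+|A(D)|+h(D)$ by $3n$. By Theorem~\ref{main4},
\[
\fvs(D)\ \le\ \frac{|V(D)|+|A(D)|+h(D)}{7},
\]
so it suffices to establish the arithmetic inequality $|V(D)|+|A(D)|+h(D)\le 3n$. I would prove this by splitting the sum over the connected components of $D$, since the left-hand side distributes: $|V(D)|=\sum_C|V(C)|$, $|A(D)|=\sum_C|A(C)|$, and $h(D)=\sum_{C\in\mathcal{H}}1$.

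For a component $C$ of $D$ that is \textbf{not} in $\mathcal{H}$, the max-degree-$4$ assumption together with the handshaking identity $\sum_{v\in V(C)}d_C(v)=2|A(C)|$ gives $|A(C)|\le 2|V(C)|$, hence $|V(C)|+|A(C)|\le 3|V(C)|$. For a component $C\in\mathcal{H}$, I would prove by induction on the construction of $\mathcal{H}$ that $|A(C)|\le 2|V(C)|-1$, which is equivalent to $|V(C)|+|A(C)|+1\le 3|V(C)|$. The base cases are direct: $H_1$ has $3$ vertices and $3$ arcs; $H_2$ and $H_3$ each have $5$ vertices and $8$ arcs, so in all three cases $|A|\le 2|V|-1$. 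For the inductive step, if $C$ arises from $C',C''\in\mathcal{H}$ by adding a single bridge arc, then $|V(C)|=|V(C')|+|V(C'')|$ and $|A(C)|=|A(C')|+|A(C'')|+1$, so by the inductive hypothesis
\[
|A(C)|\ \le\ \bigl(2|V(C')|-1\bigr)+\bigl(2|V(C'')|-1\bigr)+1\ =\ 2|V(C)|-1.
\]

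Summing the two cases over all components yields
\[
|V(D)|+|A(D)|+h(D)\ =\ \sum_{C\notin\mathcal{H}}\bigl(|V(C)|+|A(C)|\bigr)+\sum_{C\in\mathcal{H}}\bigl(|V(C)|+|A(C)|+1\bigr)\ \le\ \sum_{C}3|V(C)|\ =\ 3n,
\]
and plugging into Theorem~\ref{main4} gives $\fvs(D)\le 3n/7$, as required. There is no serious obstacle here: the only non-trivial ingredient is the edge-count bound for digraphs in $\mathcal{H}$, and the recursive definition of $\mathcal{H}$ as joins by single bridges makes the induction essentially automatic. Note moreover that Theorem~\ref{main4} is applied to $D$ as a whole and does \emph{not} require the $\mathcal{H}$-components to sit inside $D$ with max degree $\le 4$ in any special way; we only use structural facts about $\mathcal{H}$ to bound $|A(C)|$.
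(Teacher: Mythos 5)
Your proof is correct and follows essentially the same route as the paper: both reduce the corollary to the bound $|V(C)|+|A(C)|+[C\in\mathcal{H}]\le 3|V(C)|$ per connected component, using the handshaking bound $|A(C)|\le 2|V(C)|$ for general components and the sharper $|A(C)|\le 2|V(C)|-1$ for components in $\mathcal{H}$. The only cosmetic difference is that you establish the latter by induction on the recursive construction of $\mathcal{H}$, whereas the paper just observes that every member of $\mathcal{H}$ has a vertex of degree at most $3$ and invokes integrality.
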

\begin{proof}
Note that $|A(D)| \leq 2|V(D)|$ for all digraphs $D$ of maximum degree at most $4$, $|A(D)| < 2|V(D)|$ 
for all digraphs of maximum degree at most $4$ in which at least one vertex is of degree at most 3, and every digraph in $\cal H$ has a vertex of degree at most 3. 
Thus, Theorem~\ref{main4} implies that for every connected oriented graph of order $n$, we have $\fvs(D) \leq \frac{3n}{7}$. Summing such bounds over all connected components of an oriented graph $D$, we obtain the bound of the corollary.
\end{proof}

This corollary is tight due to a $2$-regular digraph of order $7$ with minimum feedback vertex set equal to 3, see Figure~\ref{orgraph4}.
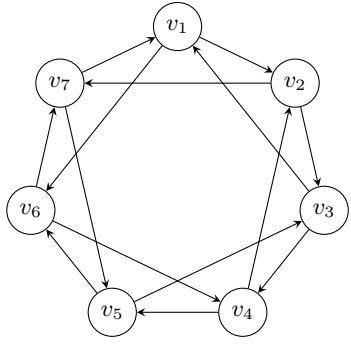
\begin{figure}[htb]
\begin{center}
\tikzstyle{vertexX}=[circle, draw, top color=black!20, bottom color=black!10, minimum size=10pt, scale=0.75, inner sep=0.8pt]
\begin{tikzpicture}[>=stealth,
                    node style/.style={circle, draw, fill=white, inner sep=3pt, font=\small}]
\node[node style] (v1) at (90:2) {$v_1$};
\node[node style] (v2) at (90-51.43:2) {$v_2$};
\node[node style] (v3) at (90-102.86:2) {$v_3$};
\node[node style] (v4) at (90-154.29:2) {$v_4$};
\node[node style] (v5) at (90-205.71:2) {$v_5$};
\node[node style] (v6) at (90-257.14:2) {$v_6$};
\node[node style] (v7) at (90-308.57:2) {$v_7$};

\draw[->] (v1) -- (v2);
\draw[->] (v2) -- (v3);
\draw[->] (v3) -- (v4);
\draw[->] (v4) -- (v5);
\draw[->] (v5) -- (v6);
\draw[->] (v6) -- (v7);
\draw[->] (v7) -- (v1);
\draw[->] (v4) -- (v2); 
\draw[->] (v2) -- (v7); 
\draw[->] (v7) -- (v5); 
\draw[->] (v3) -- (v1); 
\draw[->] (v5) -- (v3); 
\draw[->] (v6) -- (v4); 
\draw[->] (v1) -- (v6); 

\end{tikzpicture}
\caption{a $2$-regular digraph of order $7$ with feedback vertex set equal to 3}
\label{orgraph4}
\end{center}
\end{figure}

Before proving Theorem~\ref{main4} we need the following lemma.

\begin{lemma} \label{Hlemma}
  Let $H \in {\cal H}$ be arbitrary. Then the following statements hold:
\begin{description}
\item[(a):] $\fvs(H)= \frac{n+m+1}{7}$.
\item[(b):] Let $C_1,C_2,\ldots,C_l$ be the strong components of $H$ and let $X \subseteq V(H)$ be any set such that $|V(C_i) \cap X| \leq 1$ for all $i=1,2,\ldots,l$.
Then $C_i$ is isomorphic to either $H_1$, $H_2$ or $H_3$ for all $i=1,2,\ldots,l$ and there exists a minimum feedback vertex set in $H$ containing all vertices in $X$.
\item[(c):] If $H \not\in \{H_2,H_3\}$ then $\sum_{x \in V(H)} (4-d(x)) \geq 6$.
\item[(d):] If $H \in \{H_2,H_3\}$ then $\sum_{x \in V(H)} (4-d(x)) =4$.
\end{description}
\end{lemma}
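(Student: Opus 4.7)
The plan is to induct on the number of base digraphs ($H_1$, $H_2$, $H_3$) used in the construction of $H$; call this number $k(H)$. All four parts follow from this single induction once the base cases are settled and the effect of adding a bridge is analyzed.

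For $k(H) = 1$, I would verify (a), (c), (d) directly. $H_1$ is a $3$-cycle, giving $\fvs(H_1) = 1 = (3+3+1)/7$ and slack $\sum_x(4-d(x)) = 6$. For $H_2$ and $H_3$ we have $n = 5$ and $m = 8$, so the formula in (a) gives $2$; I would confirm $\fvs = 2$ by checking that each single-vertex deletion leaves at least one surviving triangle, while an explicit two-element set such as $\{x,a\}$ destroys all cycles. Reading the degree sequences off the arc lists yields slack $4$ in both cases, proving (d).

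For the inductive step, write $H = H' \cup H'' \cup \{uv\}$ with $u \in V(H')$, $v \in V(H'')$, and $H', H'' \in \mathcal{H}$ of smaller $k$-value. Because $uv$ is a bridge of the underlying graph, it lies on no directed cycle of $H$; hence $\fvs(H) = \fvs(H') + \fvs(H'')$. Combined with $|V(H)| = |V(H')| + |V(H'')|$ and $|A(H)| = |A(H')| + |A(H'')| + 1$, the inductive hypothesis immediately delivers (a). For (c)--(d), adding the bridge drops the total slack by exactly $2$, so
\[
\sum_{x \in V(H)}(4 - d_H(x)) = \sum_{x \in V(H')}(4 - d_{H'}(x)) + \sum_{x \in V(H'')}(4 - d_{H''}(x)) - 2.
\]
Since each base value is at least $4$, every $H \in \mathcal{H}$ has slack at least $4$; moreover, whenever at least one bridge is present, the slack is at least $4 + 4 - 2 = 6$, and the only cases with slack exactly $4$ are $H_2$ and $H_3$ themselves, giving (c).

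For (b), the same bridge observation shows that the strong components of $H$ are exactly the base digraphs used in its construction; since $H_1, H_2, H_3$ are each strongly connected, every $C_i$ is isomorphic to one of them. Because $\fvs$ is additive over this strong-component decomposition, a minimum feedback vertex set can be chosen independently in each $C_i$, and it suffices to show that in each of $H_1, H_2, H_3$ every single vertex extends to a minimum feedback vertex set. For $H_1$ this is immediate. For $H_2$ and $H_3$ I would carry out a short case analysis over the five vertices: in each case, deleting the chosen vertex $v$ leaves a digraph whose only surviving cycle is a single triangle, and removing any one vertex of that triangle produces an acyclic remainder, giving a size-$2$ feedback vertex set containing $v$. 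The main obstacle is precisely this case analysis inside $H_2$ and $H_3$, which is finite but requires care because of the multiple overlapping triangles in each base digraph; once it is verified, the additive structure across bridges upgrades the conclusion to arbitrary members of $\mathcal{H}$ satisfying $|X \cap V(C_i)| \leq 1$.
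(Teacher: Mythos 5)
Your proposal is correct and follows essentially the same route as the paper: verify the base digraphs directly, use the fact that the connecting arc is a bridge (hence on no directed cycle) to get additivity of $\fvs$ and the slack recursion $\sum(4-d)$ decreasing by exactly $2$ per bridge, and reduce (b) to a finite check inside $H_1$, $H_2$, $H_3$. One small correction to your anticipated case analysis for (b): it is not true that deleting any vertex of $H_2$ leaves a single surviving triangle --- $H_2-y$ still contains the cycles $xabx$, $zabz$ and $xabzx$ (and $H_3-z$ leaves a $4$-cycle rather than a triangle) --- but since all surviving cycles in $H_2-y$ pass through both $a$ and $b$, the intended conclusion (every vertex lies in a size-$2$ feedback vertex set, e.g.\ the sets $\{x,b\}$, $\{z,a\}$, $\{y,a\}$ used in the paper) still holds.
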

\begin{proof}
We prove (a) by induction on $|V(H)|$. Observe first that $\fvs(H_1)=1=\frac{|V(H_1)|+|A(H_1)|+1}{7}$. As $xyzx$ forms a directed 3-cycle in $H_2$ and $H_3$, every feedback vertex set must contain at least one vertex of $\{x,y,z\}$. Hence
	\[
	\fvs(H_i)=2=\frac{|V(H_i)|+|A(H_i)|+1}{7} \qquad \text{for } i\in\{2,3\},
	\]
	since $H_2$ and $H_3$ still contain a directed cycle after the removal of any single vertex of $\{x,y,z\}$, whereas the induced subgraph $H_i-\{x,z\}$ is acyclic for $i\in\{2,3\}$. This settles the base cases of (a). Let $H\in\mathcal{H}$ and assume that $H$ is obtained by adding an arc between $H'$ and $H''$, where $H',H''\in\mathcal{H}$. By the induction hypothesis, we have that
	$
	\fvs(H')=\frac{|V(H')|+|A(H')|+1}{7}$ and $
	\fvs(H'')=\frac{|V(H'')|+|A(H'')|+1}{7}.
	$
	Since the union of minimum feedback vertex sets of $H'$ and $H''$ is clearly a minimum feedback vertex set of $H$ and $|A(H)|=|A(H')|+|A(H'')|+1$, we obtain
	\[
	\fvs(H)
	=\frac{|V(H')|+|A(H')|+|V(H'')|+|A(H'')|+2}{7}
	=\frac{|V(H)|+|A(H)|+1}{7},
	\]
	which proves~(a).
	
By the definition of $\mathcal{H}$, every strong component of $H$ is isomorphic to $H_1$, $H_2$, or $H_3$. Since $\fvs(H_1)=1$ and $\fvs(H_2)=\fvs(H_3)=2$, to prove (b) it suffices to show that every vertex of $H_2$ or $H_3$ is contained in a feedback vertex set of size~2.

Observe that
\[
\{x,b\}, \qquad
\{z,a\}, \qquad
\{y,a\}
\]
are minimum feedback vertex sets of $H_2$ and $H_3$. Hence (b) follows, as each vertex of $C_i$ lies in at least one of these sets.

	(d) is straightforward to verify. It remains to prove (c). Note that
	\[
	\sum_{x\in V(H_1)} (4-d_{H_1}(x)) = 6.
	\]
	Hence we may assume that $H$ has at least two strong components. Let $C_1$ and $C_\ell$ be the two end strong components, i.e., those adjacent to exactly one other strong component. Using (d), the identity
	$
	\sum_{x\in V(H_1)} (4-d_{H_1}(x)) = 6,
	$
	and the fact that there is exactly one arc between $C_1$ (or $C_\ell$) and another component, we obtain
	\[
	\sum_{x\in V(C_1)\cup V(C_l)} (4-d_D(x))=	\sum_{x\in V(C_1)} (4-d_{C_1}(x))+\sum_{x\in V(C_l)} (4-d_{C_l}(x))-2  \ge 4 + 4 - 2 = 6.
	\]
	This establishes (c) and completes the proof of the lemma.
\end{proof}

Let us recall Theorem~\ref{main4}.

\2

\noindent
{\bf Theorem~\ref{main4}.} {\em Let $D$ be an oriented graph with the maximum degree at most $4$. Then, we have 
\[
\fvs(D) \leq \frac{|V(D)|+|A(D)| + h(D)}{7}
\]
}
\begin{proof}
Assume that the theorem is false and let $D$ be a counter example to the theorem of smallest possible order. We will prove a number of claims regarding $D$, ending in a contradiction, which will complete 
the proof. Let $D$ have order $n$ and size $m$.

\2

{\bf Claim A} {\em $D$ is connected and $h(D)=0$.  In fact, $UG(D)$ is even $2$-edge-connected.}

\2

{\bf Proof of Claim~A.} For the sake of contradiction, assume $D$ is not connected. By the minimality of $n$ we note that the theorem holds for all components of $D$ and therefore also for $D$, a contradiction.
So $D$ is connected. For the sake of contradiction, assume that $h(D) > 0$. As $D$ is connected, this implies that $h(D)=1$ and $D \in {\cal H}$. 
By Lemma~\ref{Hlemma}(a) we note that $\fvs(D)= \frac{n+m+1}{7}$ and therefore $D$ is not a counter example to the theorem, a contradiction. 

Now we prove the next part of the claim.
For the sake of contradiction, assume that $D$ contains an arc $a \in A(D)$ such that $D-a$ is not connected. 
Analogously to the above, the theorem holds for all components of $D-a$ and therefore also for $D$, as $a$ does not belong to any cycle of $D$, a contradiction. So Claim~A holds.

\2

{\bf Claim B} {\em $n \geq 4$.}

\2

{\bf Proof of Claim~B.} For the sake of contradiction, assume that $n \leq 3$. 
As $D \not\in {\cal H}$ by Claim~A, we note that $D$ is not a $3$-cycle. Therefore, $\fvs(D)=0$ and  $D$ is not a counter example to the theorem, a contradiction. So Claim~B holds.

\2

{\bf Claim C} {\em If $H$ is an induced subgraph of $D$ and $H \in {\cal H}$, then there are at least 3 arcs between $V(H)$ and $V(D) \setminus V(H)$.}

\2

{\bf Proof of Claim~C.}  Let $H$ be an induced subgraph of $D$ and, for the sake of contradiction, assume that $H \in {\cal H}$, but
there are at most 2  arcs between $V(H)$ and $V(D) \setminus V(H)$.  By Claim~A we note that there must be exactly 2  arcs between $V(H)$ and $V(D) \setminus V(H)$ and 
let $a$ be such an arc. Let $x$ be the endpoint of $a$ that belongs to $V(H)$. 
By Lemma~\ref{Hlemma}(b), there exists a minimum feedback vertex set $F_H$ in $H$ that contains $x$.

Let $D'=D - V(H)$. We must have $h(D') \leq 1$ by Claim~A. 
As $n'=n-|V(H)|$ and $m'=m-|A(H)|-2$ and $h(D') \leq 1$ 
the following holds by induction, thereby proving Claim~C.

\[
\begin{array}{rcl} \vspace{0.1cm}
\fvs(D) & \leq  & \fvs(D') + |F_H| \\ \vspace{0.1cm}
       & \leq & \frac{n'+m'+h(D')}{7} + \frac{|V(H)|+|A(H)|+1}{7} \\ \vspace{0.1cm}
       & \leq  & \frac{(n-|V(H)|)+(m-|A(H)|-2)+1}{7} + \frac{|V(H)|+|A(H)|+1}{7}  \\
       & \leq & \frac{n+m}{7} \\
\end{array}
\]

\2

{\bf Claim D} {\em $d^+(x) \geq 1$ and $d^-(x) \geq 1$ for all $x \in V(D)$.}

\2

{\bf Proof of Claim~D.} By Claim~A and Claim~B we note that $D$ contains no isolated vertices.
For the sake of contradiction assume that $x \in V(D)$ has in-degree or out-degree zero. 
Let $D'=D-x$ and let $n'$ be the order of $D'$ and let $m'$ be the size of $D'$.
 If $F'$ is a minimum feedback vertex set in $D'$ then $F'$ is also a 
feedback vertex set in $D$ (as $x$ doesn't lie on any cycle in $D$), which implies that $\fvs(D) \leq \fvs(D')$.

As $n'=n-1$ and $m'=m-d(x)$ and $h(D') \leq 1$ (by Claim~C, as $h(D)=0$ by Claim~A) 
the following holds by induction, thereby proving Claim~D.

\[
\begin{array}{rcl} \vspace{0.1cm}
\fvs(D) & \leq  & \fvs(D') \\ \vspace{0.1cm}
       & \leq & \frac{n'+m'+h(D')}{7} \\ \vspace{0.1cm}
       & \leq  & \frac{(n-1)+(m-d(x))+1}{7} \\
       &  < & \frac{n+m}{7} \\
\end{array}
\]

\2

{\bf Claim E} {\em $\delta(D) \geq 3$.}

\2
  
{\bf Proof of Claim~E.} For the sake of contradiction, assume that $\delta(D) \leq 2$. Let $x \in V(D)$ be chosen such that $d(x)$ is minimum. By Claim~D we note that 
 $d^+(x) = d^-(x) = 1$.
Let $N^-(x)=\{y\}$ and let $N^+(x)=\{z\}$ and note that $yxz$ is a path in $D$.  We now prove the following subclaims.

\2

{\bf Subclaim E.1. $zy \in A(D)$:} 

\2

{\bf Proof of Subclaim~E.1.} For the sake of contradiction assume that $zy \not\in A(D)$.
Let $D'$ be obtained from $D-x$ by adding the arc $yz$, if $yz$ was not already an arc in $D$.  Let $n'$ be the order of $D'$ and let $m'$ be the size of $D'$.
 If $F'$ is a minimum feedback vertex set in $D'$ then $F'$ is also a
feedback vertex set in $D$, since any cycle in $D-F'$ must use $x$ and therefore substituting the path $yxz$ by the arc $yz$ on this cycle gives us a cycle in $D'-F'$, a contradiction.
Therefore, $\fvs(D) \leq \fvs(D')$. As $D$ is connected we note that $D'$ is connected.
This implies that $h(D') \leq 1$.
The following now holds, by induction, which completes the proof of Subclaim~E.1.

\[
\fvs(D) \leq \fvs(D') \leq \frac{n'+m'+h(D')}{7} \leq \frac{(n-1)+(m-1)+1}{7} < \frac{n+m}{7}
\]

\2

{\bf Subclaim E.2. $d^+(y) \not= 2$ or $d^-(y) \not= 2$ or $d^+(z) \not= 2$ or $d^-(z) \not= 2$:}

\2

{\bf Proof of Subclaim~E.2.} For the sake of contradiction assume that $d^+(y) = 2$ and $d^-(y) = 2$ and $d^+(z) = 2$ and $d^-(z) = 2$. 
By Subclaim E.1. $zy \in A(D)$ (see Figure~\ref{E2yxz}).
Let $D_{yx}=D -\{y,x\}$ and let $D_{xz} = D - \{x,z\}$. For the sake of contradiction assume that $h(D_{yx})=0$. We now
obtain the following contradiction by induction.

\begin{figure}[htb]
\begin{center}
\tikzstyle{vertexX}=[circle, draw, top color=black!20, bottom color=black!10, minimum size=13pt, scale=0.75, inner sep=0.8pt]
\tikzstyle{vertexXp}=[circle, draw, top color=black!20, bottom color=black!10, minimum size=13pt, scale=0.65, inner sep=0.8pt]
\tikzstyle{vertexXpp}=[circle, draw, top color=black!20, bottom color=black!10, minimum size=13pt, scale=0.56, inner sep=0.8pt]
\begin{tikzpicture}[scale=0.3]
  \node (y) at (1,1) [vertexX]{$y$};
  \node (x) at (5,1) [vertexX]{$x$};
  \node (z) at (9,1) [vertexX]{$z$};

  \node (zp) at (12.5,2) [vertexX]{$z'$};
  \node (zpp) at (12.5,0) [vertexX]{$z''$};

  \draw[->, line width=0.03cm] (y) -- (x);
  \draw[->, line width=0.03cm] (x) -- (z);
  \draw[->, line width=0.03cm] (z) to [out=220, in=320] (y);

  \draw[->, line width=0.03cm, dotted] (y) -- (-1,0.5);
  \draw[->, line width=0.03cm, dotted] (-1,1.5) -- (y);

  \draw[->, line width=0.03cm] (zpp) -- (z);
  \draw[->, line width=0.03cm] (z) -- (zp);


\end{tikzpicture} 
\caption{A subgraph of $D$ in Case E.2.}
\label{E2yxz}
\end{center}
\end{figure}
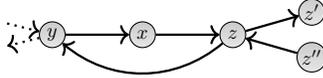


\[
\fvs(D)  \leq  \fvs(D_{yx}) + |\{y\}| 
        \leq  \frac{(n-2)+(m-5)+h(D_{yx})}{7} + 1 
        \leq  \frac{n+m}{7} 
\]

Therefore, $h(D_{yx})>0$. By Claim~C we note that $h(D_{yx})=1$, so let $H_{yx}$ be the component in $D_{yx}$ that belongs to ${\cal H}$.
By Claim~A and Claim~C we note that $H_{yx}=D_{yx}$. So $D_{yx} \in {\cal H}$.
Analogously, we can show that $D_{xz} \in {\cal H}$.


Let $C_{yx}$ be the strong component in $D_{yx}$ that contains the vertex $z$. Note that in $D_{yx}$ the degree of $z$ is exactly two, which implies that $C_{yx} \not= H_3$ (as $H_3$ has no degree two vertices). Thus, $z$ is part of a $3$-cycle, say $z z' z'' z$, in $C_{yx}$ (as this holds for all $H_1$ and $H_2$).
We first consider the case when $C_{yx}=H_1$, then it is not difficult to see that $z' z'' y z'$ is a strong component in $D_{xz}$. And now $\{y,x,z,z',z''\}$ induces a copy of $H_2$ in $D$. And this implies that $h(D) >0$, contradicting Claim~A. So we must have $C_{yx}=H_2$. The digraph $C_{yx}-z$ is depicted in Figure~\ref{E2}.

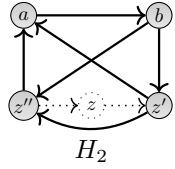
\begin{figure}[htb]
\begin{center}
\tikzstyle{vertexX}=[circle, draw, top color=black!20, bottom color=black!10, minimum size=13pt, scale=0.75, inner sep=0.8pt]
\tikzstyle{vertexZ}=[circle, dotted, draw, minimum size=13pt, scale=0.75, inner sep=0.8pt]

\begin{tikzpicture}[scale=0.3]
  \node at (4,-1) {$H_2$};
  \node (x) at (1,1) [vertexX]{$z''$};
  \node (y) at (4,1) [vertexZ]{$z$};
  \node (z) at (7,1) [vertexX]{$z'$};

  \node (a) at (1,5) [vertexX]{$a$};
  \node (b) at (7,5) [vertexX]{$b$};
  \draw[->, line width=0.02cm, dotted] (x) -- (y);
  \draw[->, line width=0.02cm, dotted] (y) -- (z);
  \draw[->, line width=0.03cm] (z) to [out=210, in=330] (x);
  \draw[->, line width=0.03cm] (a) -- (b);
  \draw[->, line width=0.03cm] (b) -- (x);
  \draw[->, line width=0.03cm] (b) -- (z);
  \draw[->, line width=0.03cm] (x) -- (a);
  \draw[->, line width=0.03cm] (z) -- (a);
\end{tikzpicture} \hspace{1cm}
\caption{The digraph $C_{yx}-z$ in Case E.2.}
\label{E2}
\end{center}
\end{figure}

Some strong component, $C^*$, of $D_{xz}$ must contain $z'$, $z''$ and $y$, as $C_{yx}-z$ is a strong component in $D-\{yxz\}$ that does not belong to ${\cal H}$.
Considering $C_{yx}-z$ (depicted in Figure~\ref{E2}) we note that $y$ must be adjacent to $a$ and $b$ (as if $y$ is adjacent to either $z'$ or $z''$ then the degrees would become 
larger than four). But now $C^*$ is not isomorphic to $H_1$, $H_2$ or $H_3$, a contradiction.
This completes the proof of Subclaim~E.2.

\2

We now return to the proof of Claim~E. 
By Subclaim~E.1, we know that $zy \in A(D)$.  
By Subclaim~E.2, we may assume that $d^+(y) \not= 2$ or $d^-(y) \not= 2$ (the case when $d^+(z) \not= 2$ or $d^-(z) \not= 2$ can be handled analogously).
Let $D' = D - \{y,x,z\}$ and let $n'$ be the order of $D'$ and let $m'$ be the size of $D'$.
If $F'$ is a minimum feedback vertex set in $D'$ then
$F' \cup \{z\}$ is a feedback vertex set in $D$. Therefore, $\fvs(D) \leq \fvs(D')+1$. 

Let $s = (d(y)-2)+(d(z)-2)$ and note that $n'=n-3$ and $m'=m-3-s$.
By Claim~C we note that $h(D') \leq 1$ (as $s\leq 4$) and $h(D')=0$ when $s \leq 2$.
We now obtain the following contradiction, by induction.

\[
\fvs(D)  \leq  \fvs(D')+1  
        \leq  \frac{n'+m'+h(D')}{7} + 1  
        \leq  \frac{(n-3)+(m-3-s)+h(D') }{7} + 1  
\]

So, if $s \leq 2$ then $\fvs(D) \leq \frac{n+m}{7}$ (as $s \geq 1$ and $h(D')=0$) and if  $s > 2$ then again $\fvs(D) \leq \frac{n+m}{7}$ (as $h(D') \leq 1$).
So in all cases we have $\fvs(D) \leq \frac{n+m}{7}$, a contradiction.

\2

{\bf Claim F} {\em $D$ is $2$-regular.}

\2
 
{\bf Proof of Claim~F.} For the sake of contradiction assume that $D$ is not $2$-regular, which implies that some vertex, $x$, has $d^+(x)<2$ or $d^-(x)<2$. 
Without loss of generality assume that $d^-(x)<2$. By Claim~D we note that $d^-(x)=1$. By Claim~E we note that $d^+(x) \geq 2$. 
Let $y$ be the unique in-neighbour of $x$ in $D$ and note that $d(y) \geq 3$ by Claim~E. 

Let $D' = D - \{x,y\}$ and let $n'$ be the order of $D'$ and let $m'$ be the size of $D'$.
First consider the case when $h(D')=0$. Note that $\fvs(D) \leq \fvs(D')+1$ as adding $y$  to any feedback vertex set in $D'$ gives us a feedback vertex set in $D$.
Since $n'=n-2$ and $m' \leq m-5$ we now obtain the following contradiction, by induction.

\[
\begin{array}{rcl} \vspace{0.1cm}
\fvs(D) & \leq & \fvs(D')+1  \\ \vspace{0.1cm}
       & \leq & \frac{n'+m'+h(D')}{7} + 1 \\ \vspace{0.1cm}
       & \leq & \frac{(n-2)+(m-5) }{7} + 1 \\ 
       &  =   & \frac{n+m}{7} \\
\end{array}
\]

So we may assume that $h(D')>0$. 
If $d(y)+d(x) \leq 7$, then $h(D') = 1$, by Claim~C and if $d(y)+d(x) =6$, then $D' \in {\cal H}$, by Claim~A and Claim~C.
 We now consider the following cases.

\begin{description}
 \item[Case F.1. $d(y)=4$ and $d(x)=4$:] By Claim~C note that $h(D') \leq 2$. If $F'$ is a minimum feedback vertex set in $D'$ then
$F' \cup \{y\}$ is a feedback vertex set in $D$. Therefore, $\fvs(D) \leq \fvs(D')+1$. We now obtain the following contradiction, by induction.

\[
\begin{array}{rcl} \vspace{0.1cm}
\fvs(D) & \leq & \fvs(D')+1  \\ \vspace{0.1cm}
       & \leq & \frac{n'+m'+h(D')}{7} + 1  \\ \vspace{0.1cm}
       & \leq & \frac{(n-2)+(m-7) +2}{7} + 1  \\ 
       & \leq & \frac{n+m}{7} \\
\end{array}
\]

 \item[Case F.2. $d(y)+d(x)=7$:] Recall that $h(D') = 1$ (by Claim~C). We now, analogously to Case F.1., obtain the following contradiction, by induction.

\[
\begin{array}{rcl} \vspace{0.1cm}
\fvs(D) & \leq & \fvs(D')+1  \\ \vspace{0.1cm}
       & \leq & \frac{n'+m'+h(D')}{7} + 1  \\ \vspace{0.1cm}
       & \leq & \frac{(n-2)+(m-6) +1}{7} + 1  \\ 
       & \leq & \frac{n+m}{7} \\
\end{array}
\]

\begin{figure}[htb]
\begin{center}
\tikzstyle{vertexX}=[circle, draw, top color=black!20, bottom color=black!10, minimum size=13pt, scale=0.75, inner sep=0.8pt]
\tikzstyle{vertexXp}=[circle, draw, top color=black!20, bottom color=black!10, minimum size=13pt, scale=0.65, inner sep=0.8pt]
\tikzstyle{vertexXpp}=[circle, draw, top color=black!20, bottom color=black!10, minimum size=13pt, scale=0.56, inner sep=0.8pt]
\begin{tikzpicture}[scale=0.3]
  \node at (7,-1) {(a)};
  \node (y) at (5,1) [vertexX]{$y$};
  \node (x) at (9,1) [vertexX]{$x$};

  \node (y1) at (1,2) [vertexX]{};
  \node (y2) at (1,0) [vertexX]{};

  \node (x1) at (13,2) [vertexX]{};
  \node (x2) at (13,0) [vertexX]{};

  \draw[->, line width=0.03cm] (y1) -- (y);
  \draw[->, line width=0.03cm] (y2) -- (y);

  \draw[->, line width=0.03cm] (y) -- (x);
  \draw[->, line width=0.03cm] (x) -- (x1);
  \draw[->, line width=0.03cm] (x) -- (x2);

\end{tikzpicture}  \hspace{1cm}
\begin{tikzpicture}[scale=0.3]
  \node at (7,-1) {(b)};
  \node (y) at (5,1) [vertexX]{$y$};
  \node (x) at (9,1) [vertexX]{$x$}; 

  \node (y2) at (1,1) [vertexX]{}; 
  \node (z) at (7,5) [vertexX]{$z$};
  \node (x2) at (13,1) [vertexX]{};

  \draw[->, line width=0.03cm] (z) -- (y);
  \draw[->, line width=0.03cm] (y2) -- (y);

  \draw[->, line width=0.03cm] (y) -- (x);
  \draw[->, line width=0.03cm] (x) -- (z);
  \draw[->, line width=0.03cm] (x) -- (x2);
\end{tikzpicture}  \hspace{1cm}
\begin{tikzpicture}[scale=0.3]
  \node at (7,-1) {(c)};
  \node (y) at (5,1) [vertexX]{$y$};
  \node (x) at (9,1) [vertexX]{$x$}; 

  \node (z) at (5,5) [vertexX]{$z$};
  \node (w) at (9,5) [vertexX]{$w$};

  \draw[->, line width=0.03cm] (z) -- (y);
  \draw[->, line width=0.03cm] (w) -- (y);

  \draw[->, line width=0.03cm] (y) -- (x);
  \draw[->, line width=0.03cm] (x) -- (z);
  \draw[->, line width=0.03cm] (x) -- (w);

\end{tikzpicture}  
\caption{The subgraph in (a) is used in Case F.3.a. and the subgraph in (b) is used in Case F.3.b. and the subgraph in (c) is used in Case F.3.c.}
\label{F3} 
\end{center}
\end{figure}
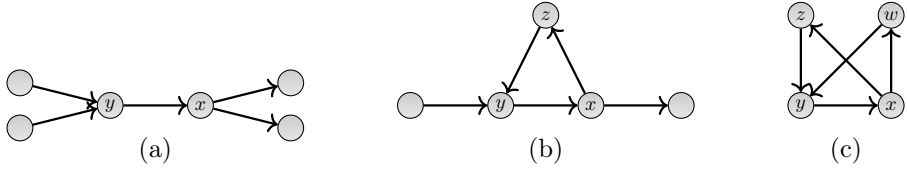

 \item[Case F.3.a. $d^+(y)=1$ and $d^-(y)=2$ and $d^+(x)=2$ and $|N^+(x) \cap N^-(y)| = 0$:] In this case let $D^*$ be obtained from $D$ by contracting the arc $yx$ into a vertex $w$. 
Let $n^*$ be the order of $D^*$ and let $m^*$ be the size of $D^*$.
Let $F^*$ be a minimum feedback vertex set in $D^*$. If $w \in F^*$ then replace it with $x$ (or $y$). Now $F^*$ is a feedback vertex set in $D$, implying that $\fvs(D) \leq \fvs(D^*)$.
As $h(D^*) \leq 1$ 
we obtain the following contradiction, by induction.

\[
\begin{array}{rcl} \vspace{0.1cm}
\fvs(D) & \leq & \fvs(D^*)  \\ \vspace{0.1cm}
       & \leq & \frac{n^*+m^*+h(D^*)}{7}  \\ \vspace{0.1cm}
       & \leq & \frac{(n-1)+(m-1) +1}{7}  \\ 
       &  <   & \frac{n+m}{7} \\
\end{array}
\]

 \item[Case F.3.b. $d^+(y)=1$ and $d^-(y)=2$ and $d^+(x)=2$ and $|N^+(x) \cap N^-(y)| = 1$:] Define $z$ such that $N^+(x) \cap N^-(y)=\{z\}$ and let $W =(N^+(x) \cup N^-(y)) \setminus \{z\}$.
Note that $|W|=2$. Let $C_1,C_2,\ldots,C_l$ be the strong components of $D'$ and without loss of generality assume that $z \in V(C_1)$.
First consider the case when $W \setminus V(C_1) \not= \emptyset$ and let $w \in W \setminus V(C_1)$. By Lemma~\ref{Hlemma}(a-b) we note that there exists a feedback vertex set, $F'$, of $D'$
such that $\{w,z\} \subseteq F'$ and $|F'|=(n'+m'+1)/7$. Note that $F'$ is a feedback vertex set in $D$ as there is only one arc between $\{x,y\}$ and $V(D) \setminus (F' \cup \{x,y\})$.
Therefore we obtain the following contradiction.

\[
\fvs(D) \leq |F'|=\frac{n'+m'+1}{7} = \frac{(n-2)+(m-5)+1}{7} \leq \frac{n+m}{7} 
\]

So we may assume that $W \setminus V(C_1) = \emptyset$. If $C_1=H_2$ then we note that there exists a feedback vertex set, $F_1$, in $C_1$ containing the vertex $z$ (that is, the vertex of degree $2$ in $H_2$)
and one of the vertices in $W$ (that is, the two vertices of degree $3$ in $H_2$). Also we note that $D'=C_1$ and, analogously to above, $F_1$ is a feedback vertex set in $D$. So $\fvs(D)=2$ and $n=7$ and 
$m=13$ so $D$ is not a counter example to the theorem, a contradiction. So, $C_1 \not= H_2$.

 Since $H_3$ has no vertices with degree less than three and $d_{D'}(z) \le 2$, we have that $C_1 \neq H_3$. Therefore, $C_1 = H_1$.

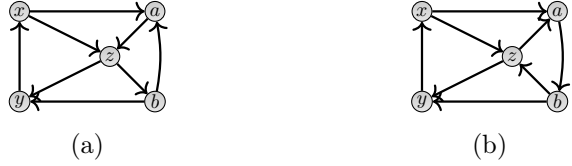
\begin{figure}[htb]
\begin{center}
\tikzstyle{vertexX}=[circle, draw, top color=black!20, bottom color=black!10, minimum size=10pt, scale=0.75, inner sep=0.8pt]
\tikzstyle{vertexY}=[circle, draw, top color=black!90, bottom color=black!70, minimum size=10pt, scale=0.75, inner sep=0.8pt]

\begin{tikzpicture}[scale=0.3]
  \node at (4,-1) {(a)};
  \node (x) at (1,5) [vertexX]{$x$};
  \node (y) at (1,1) [vertexX]{$y$};

  \node (x1) at (7,5) [vertexX]{$a$};
  \node (y1) at (5,3) [vertexX]{$z$};
  \node (z1) at (7,1) [vertexX]{$b$};

  \draw[->, line width=0.03cm] (y) -- (x);

  \draw[->, line width=0.03cm] (x1) -- (y1);
  \draw[->, line width=0.03cm] (y1) -- (z1);
  \draw[->, line width=0.03cm] (z1) to [out=80, in=280] (x1);

  \draw[->, line width=0.03cm] (x) -- (x1);
  \draw[->, line width=0.03cm] (x) -- (y1);
  \draw[->, line width=0.03cm] (y1) -- (y);
  \draw[->, line width=0.03cm] (z1) -- (y);
\end{tikzpicture} \hspace{3cm}
\begin{tikzpicture}[scale=0.3]
  \node at (4,-1) {(b)};
  \node (x) at (1,5) [vertexX]{$x$};
  \node (y) at (1,1) [vertexX]{$y$};

  \node (x1) at (7,5) [vertexX]{$a$};
  \node (y1) at (5,3) [vertexX]{$z$};
  \node (z1) at (7,1) [vertexX]{$b$};

  \draw[->, line width=0.03cm] (y) -- (x);

  \draw[->, line width=0.03cm] (y1) -- (x1);
  \draw[->, line width=0.03cm] (z1) -- (y1);
  \draw[->, line width=0.03cm] (x1) to [out=280, in=80] (z1);

  \draw[->, line width=0.03cm] (x) -- (x1);
  \draw[->, line width=0.03cm] (x) -- (y1);
  \draw[->, line width=0.03cm] (y1) -- (y);
  \draw[->, line width=0.03cm] (z1) -- (y);
\end{tikzpicture}
\caption{The digraph $D[V(C_1) \cup \{x,y\}]$ either looks like (a) or (b) in Case F.3.b, when $C_1=H_1$.} 
\label{Df3b}
\end{center}
\end{figure}

The digraph $D[V(C_1) \cup \{x,y\}]$ now looks either like Figure~\ref{Df3b}(a) or Figure~\ref{Df3b}(b).
If $D[V(C_1) \cup \{x,y\}]$ looks like Figure~\ref{Df3b}(a) then we note that $\{z\}$ is a feedback vertex set in $D[V(C_1) \cup \{x,y\}]$.
Therefore any feedback vertex set of $D'$ containing the vertex $z$ will be a feedback vertex set in $D$ (as $C_1$ is a strong component in $D'$).
So, $\fvs(D) \leq \fvs(D')$ by Lemma~\ref{Hlemma}(b), and we obtain the following contradiction by Lemma~\ref{Hlemma}(a).

\[
\fvs(D) \leq \fvs(D') = \frac{n'+m'+1}{7} \leq \frac{(n-2)+(m-5)+1}{7} < \frac{n+m}{7}
\]

So, we may assume that $D[V(C_1) \cup \{x,y\}]$ looks like Figure~\ref{Df3b}(b). However, this digraph is isomorphic to $H_3$ so we note that $D$ is actually
a digraph in ${\cal H}$, a contradiction to Claim~A. This completes Case F.3.b.

 \item[Case F.3.c. $d^+(y)=1$ and $d^-(y)=2$ and $d^+(x)=2$ and $|N^+(x) \cap N^-(y)| = 2$:] 
 Define $z$ and $w$ such that $N^+(x) \cap N^-(y)=\{z,w\}$.
Let $C_1,C_2,\ldots,C_l$ be the strong components of $D'$ and without loss of generality assume that $z \in V(C_1)$.
First consider the case when $w \not\in V(C_1)$. By Lemma~\ref{Hlemma}(a-b) we note that there exists a feedback vertex set, $F'$, of $D'$
such that $\{w,z\} \subseteq F'$ and $|F'|=(n'+m'+1)/7$. Note that $F'$ is a feedback vertex set in $D$,
which implies that we obtain the following contradiction.

\[
\fvs(D) \leq |F'|=\frac{n'+m'+1}{7} = \frac{(n-2)+(m-5)+1}{7} \leq \frac{n+m}{7} 
\]

So, $\{w,z\} \subseteq V(C_1)$. Therefore $C_1$ contains at least two vertices of degree two, which implies that $C_1=H_1$. 
This implies that $D[V(C_1) \cup \{x,y\}]$ is isomorphic to $H_2$ and therefore $D$  is actually
a digraph in ${\cal H}$, a contradiction to Claim~A. This completes Case F.3.c.

 \item[Case F.4. The remaining cases:] The remaining cases are when $d(x)=3$ and $d(y)=3$ and $d^+(y)=2$. Let $w$ be the unique in-neighbour of $y$ 
in $D$. By Lemma~\ref{Hlemma}(a-b)  we note that there exists a feedback vertex set, $F'$, of $D'$
such that $w \in F'$ and $|F'|=(n'+m'+1)/7$. Note that $F'$ is a feedback vertex set in $D$, 
which implies the following, a contradiction. This completes Case F.4.

\[
\fvs(D) \leq |F'|=\frac{n'+m'+1}{7} = \frac{(n-2)+(m-5)+1}{7} \leq \frac{n+m}{7}
\]

\end{description}

\2

{\bf Claim G} {\em Let $X,Y \subseteq V(D)$ and call $(X,Y)$ a reducible pair, if $X \cap Y = \emptyset$ and for any feedback vertex set, $F'$, in $H-(X \cup Y)$,
$F' \cup X$ is a feedback vertex set in $H$. The following two statemsnts now hold

\begin{description}
\item[(a):] There is no reducible pair, $(X,Y)$, in $D$ where $|X|=2$ and $|Y|=3$.
\item[(b):] There is no reducible pair, $(X,Y)$, in $D$ where $|X|=|Y|=2$.
\item[(c):] There is no reducible pair, $(X,Y)$, in $D$ where $|X|=4$ and $|Y|=5$ and $V(D) \not= X \cup Y$.
\item[(d):] There is no reducible pair, $(X,Y)$, in $D$ where $|X|=4$ and $|Y|\geq 6$.
\end{description}}

\2

{\bf Proof of Claim~G.} For the sake of contradiction assume that we have a reducible pair, $(X,Y)$, in $D$ where $|X|=2$ and $|Y|=3$.
Let $T = D[X \cup Y]$ and let $D'=D-V(T)$ and let $F'$ be a minimum feedback vertex set in $D'$.
Let $n'$ be the order of $D'$ and let $m'$ be the size of $D'$.
By the definition of a reducible pair we note that $F=F' \cup X$ is a feedback vertex set in $D$.
Let $q$ denote the number of ars between $V(T)$ and $V(D) \setminus V(T)$ in $D$. 

If $q=0$ then $D$ (and $T$) is a $2$-regular tournament (by Claim~A and Claim~F) of order $5$. As $X$ is a feedback vertex set in $D$ we note that 
$\fvs(D) \leq 2 < (5+10)/7 = (n+m)/7$, a contradiction to $D$ being a counter example. So $q>0$, which implies that $D'$ is non-empty.

Note that $|A(T)|=10-q/2$, which implies that $n'=n-5$ and $m'=m-(10-q/2)-q = m-10-q/2$.
As $D$ is $2$-regular we note that any connected component of $D'$ that belongs to ${\cal H}$ must have at least $4$ arcs between it and $V(T)$ in $D$ 
(by  Lemma~\ref{Hlemma}(c-d)). Therefore $h(D') \leq q/4$. 
 We now obtain the following contradiction, by induction, thereby proving part (a) of Claim~G.

\[
\begin{array}{rcl} \vspace{0.1cm}
\fvs(D) & \leq & \fvs(D')+|X|  \\ \vspace{0.1cm}
       & \leq & \frac{n'+m'+h(D')}{7} + 2  \\ \vspace{0.1cm}
       & \leq & \frac{(n-5)+(m-10-q/2) +q/4}{7} +2 \\
       & < & \frac{n+m}{7} \\
\end{array}
\]

We now, for the sake of contradiction, assume that we have a reducible pair, $(X,Y)$, in $D$ where $|X|=2$ and $|Y|=2$.
As before, let $T = D[X \cup Y]$ and let $D'=D-V(T)$ and let $F'$ be a minimum feedback vertex set in $D'$
and let $n'$ be the order of $D'$ and let $m'$ be the size of $D'$.
Also, as before, let $q$ denote the number of ars between $V(T)$ and $V(D) \setminus V(T)$ in $D$.

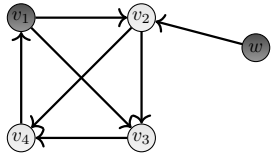
\begin{figure}[htb]
\begin{center}
\tikzstyle{vertexX}=[circle, draw, top color=black!10, bottom color=black!5, minimum size=14pt, scale=0.75, inner sep=0.8pt]
\tikzstyle{vertexY}=[circle, draw, top color=black!70, bottom color=black!40, minimum size=14pt, scale=0.75, inner sep=0.8pt]
\begin{tikzpicture}[scale=0.4]

  \node (v1) at (1,5) [vertexY]{$v_1$};
  \node (v2) at (5,5) [vertexX]{$v_2$};
  \node (v3) at (5,1) [vertexX]{$v_3$};
  \node (v4) at (1,1) [vertexX]{$v_4$};

  \node (w) at (8.8,4) [vertexY]{$w$};

  \draw[->, line width=0.03cm] (v1) -- (v2);
  \draw[->, line width=0.03cm] (v2) -- (v3);
  \draw[->, line width=0.03cm] (v3) -- (v4);
  \draw[->, line width=0.03cm] (v4) -- (v1);

  \draw[->, line width=0.03cm] (v1) -- (v3);
  \draw[->, line width=0.03cm] (v2) -- (v4);
  \draw[->, line width=0.03cm] (w) -- (v2);
\end{tikzpicture}
\caption{A subdigraph of $D$ when $q=4$. The dark vertices are the vertices in $X^*$.}
\label{GT}
\end{center}
\end{figure}

We first consider the case when $q \leq 4$. In this case we must have that $T$ is a tournament and $q=4$. 
As the minimum in-degree and the minimum out-degree in $T$ is one (as $D$ was $2$-regular) we note that 
$T$ is a strong tournament.  Therefore it contains a hamilton cycle, say $v_1 v_2 v_3 v_4 v_1$. 
Without loss of generality we may assume that $v_1 v_3, v_2 v_4 \in A(T)$ (there is only one strong tournament on $4$ vertices). 
define $w$ such that $N_D^-(v_2)=\{v_1,w\}$ and let $X^*=\{v_1,w\}$ and let $Y^*=\{v_2,v_3,v_4\}$ (see Figure~\ref{GT}).

Let $F^*$ be a minimum feedback vertex set in $D^*=D - (X^* \cup Y^*)$ and note that $F^* \cup X^*$ is a 
minimum feedback vertex set in $D$ (as both in-neighbours of $v_2$ lie in $X^*$ and both in-neighbours of $v_3$ lie in $X^* \cup \{v_2\}$, 
both in-neighbours of $v_4$ lie in $X^* \cup \{v_2,v_3\}$ and no cycle in $D-F^*-X^*$ can contain vertices from $Y^*$). 
Therefore we have a reducible pair  $(X^*,Y^*)$ in $D$ where $|X^*|=2$ and $|Y^*|=3$, contradicting part~(a).
We may therefore assume that $q\geq 5$.

As above, we note that any connected component of $D'$ that belongs to ${\cal H}$ must have at least $4$ arcs between it and $V(T)$ in $D$
(by  Lemma~\ref{Hlemma}(c-d)). Therefore $h(D') \leq q/4$.
Note that $|A(T)|=8-q/2$, which implies that $n'=n-4$ and $m'=m-(8-q/2)-q = m-8-q/2$.
 We now obtain the following, by induction.

\[
\begin{array}{rcl} \vspace{0.1cm}
\fvs(D) & \leq & \fvs(D')+|X|  \\ \vspace{0.1cm}
       & \leq & \frac{n'+m'+h(D')}{7} + 2  \\ \vspace{0.1cm}
       & \leq & \frac{(n-4)+(m-8-q/2) +q/4}{7} + 2  \\
       & \leq & \frac{n+m-q/4+2}{7} \\
\end{array}
\]

As $q \geq 5$ and $n'$, $m'$ and $h(D')$ are all integers we note that the above implies that $\fvs(D) \leq \frac{n+m}{7}$, a contradiction, thereby proving part (b) of Claim~G.

We now, for the sake of contradiction, assume that we have a reducible pair, $(X,Y)$, in $D$ where $|X|=4$ and $|Y|=5$ and $V(D) \not= X \cup Y$.
As before, let $T = D[X \cup Y]$ and let $D'=D-V(T)$ and let $F'$ be a minimum feedback vertex set in $D'$
and let $n'$ be the order of $D'$ and let $m'$ be the size of $D'$.
Also, as before, let $q$ denote the number of ars between $V(T)$ and $V(D) \setminus V(T)$ in $D$. 
By Claim~A, and the fact that $V(D) \not= X \cup Y$  we have $q \geq 1$.

As above, we note that any connected component of $D'$ that belongs to ${\cal H}$ must have at least $4$ arcs between it and $V(T)$ in $D$
(by  Lemma~\ref{Hlemma}(c-d)). Therefore $h(D') \leq q/4$.
Note that $|A(T)|=18-q/2$, which implies that $n'=n-9$ and $m'=m-(18-q/2)-q = m-18-q/2$.
 We now obtain the following, by induction.

\[
\begin{array}{rcl} \vspace{0.1cm} 
\fvs(D) & \leq & \fvs(D')+|X|  \\ \vspace{0.1cm}
       & \leq & \frac{n'+m'+h(D')}{7} + 4  \\ \vspace{0.1cm}
       & \leq & \frac{(n-9)+(m-18-q/2) +q/4}{7} + 4  \\
       & \leq & \frac{n+m-q/4+1}{7} \\
\end{array}
\]

As $q \geq 1$ and $n'$, $m'$ and $h(D')$ are all integers we note that the above implies that $\fvs(D) \leq \frac{n+m}{7}$, a contradiction, thereby proving part~(c) of Claim~G.

We now, for the sake of contradiction, assume that we have a reducible pair, $(X,Y)$, in $D$ where $|X|=4$ and $|Y| \geq 6$.
Let $y \in Y$ be arbitrary and let $X'=X$ and $Y'=Y \setminus \{y\}$.
Let $F'$ be any feedback vertex set in $D-(X' \cup Y')$. 
If $y \not\in F'$ then $F'$ is a feedback vertex set in $D-(X \cup Y)$ and therefore $X \cup F'$ is a feedback vertex set in $D$ (as $(X,Y)$ is a reducible pair).
And if $y \in F'$, then  $F' \setminus \{y\}$ is a feedback vertex set in $D-(X \cup Y)$ and therefore $X \cup F'$ is again a feedback vertex set in $D$ (even $X \cup F' \setminus \{y\}$ is
a feedback vertex set).
So $(X',Y')$ is a reducible pair contradicting part~(c) and thereby proving part~(d) of Claim~G.

\2


       


\2

{\bf Claim H} {\em $D$ does not contain a transitive triangle (that is, an acyclic orientation of a $3$-cycle).}

\2


\begin{figure}[htb]
\begin{center}
\tikzstyle{vertexX}=[circle, draw, top color=black!10, bottom color=black!5, minimum size=14pt, scale=0.75, inner sep=0.8pt]
\tikzstyle{vertexY}=[circle, draw, top color=black!70, bottom color=black!40, minimum size=14pt, scale=0.75, inner sep=0.8pt]
\begin{tikzpicture}[scale=0.3]
  \node at (5.5,-1) {(a)};

  \node (x) at (1,5) [vertexY]{$x$};
  \node (y) at (1,1) [vertexY]{$y$};
  \node (u) at (5,3) [vertexX]{$u$};
  \node (v) at (10,3) [vertexX]{$v$};

  \draw[->, line width=0.03cm] (y) -- (u);
  \draw[->, line width=0.03cm] (x) -- (u);
  \draw[->, line width=0.03cm] (u) -- (v);
  \draw[->, line width=0.03cm] (x) to [out=0, in=135] (v);
\end{tikzpicture} \hspace{2.5cm}
\begin{tikzpicture}[scale=0.3]
  \node at (5.5,-1) {(b)};
  
  \node (x) at (1,3) [vertexY]{$x$};
  \node (u) at (5,5) [vertexX]{$u$};
  \node (v) at (5,1) [vertexX]{$v$};
  \node (y) at (9,3) [vertexY]{$y$};

  \draw[->, line width=0.03cm] (x) -- (u);
  \draw[->, line width=0.03cm] (x) -- (v);
  \draw[->, line width=0.03cm] (u) -- (y);
  \draw[->, line width=0.03cm] (v) -- (y);
\end{tikzpicture} 
\caption{The digraph in (a) is a subdigraph of $D$ in the proof of Claim~H, where the dark vertices are the vertices in $X$. The digraph in (b) is the digraph $Q$ mentioned in  Claim~J.}
\label{ClaimInJ}
\end{center}
\end{figure}
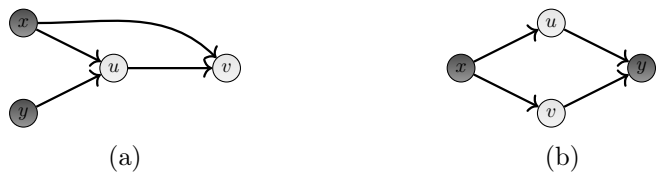

{\bf Proof of Claim~H.}  For the sake of contradiction assume that $xu,uv,xv \in A(D)$. Define $y$ such that $N_D^-(u) = \{x,y\}$
and let $X=\{x,y\}$ and let $Y=\{u,v\}$ (see Figure~\ref{ClaimInJ}(a)). 
Note that $(X,Y)$ is a reducible pair in $D$ (as both in-neighbours of $u$ lie in $X$ and both in-neighbours of $v$ lie in $X \cup \{u\}$),
contradicting Claim~G part~(b). This completes the proof of Claim~H.

\2
{\bf Claim I} {\em $D$ does not contain an induced subgraph isomorphic to $H_2$ or $H_3$.}

\2
       
{\bf Proof of Claim~I.} Assume, for the sake of contradiction, that $H$ is an induced subgraph of $D$ that is isomorphic to $H_2$ or $H_3$. Then $H$ contains a transitive triangle, contradicting Claim~H. This completes the proof of Claim~I.
\2

{\bf Claim J} {\em $D$ does not contain a subgraph isomorphic to $Q$, where $V(Q)=\{x,y,u,v\}$ and $A(Q)=\{xu,uy,xv,vy\}$ (see Figure~\ref{ClaimInJ}(b)).}

\2

{\bf Proof of Claim~J.}  For the sake of contradiction assume $D$ contains a subgraph isomorphic to $Q$ and 
let $X=\{u,v\}$ and let $Y=\{x,y\}$. Note that $(X,Y)$ is a reducible pair in $D$ (as both in-neighbours of $y$ lie in $X$ and both out-neighbours of $v$ lie in $X$),
contradicting Claim~G part~(b). This completes the proof of Claim~J.

\2

{\bf Claim K} {\em There is no reducible pair, $(X,Y)$, in $D$ where $|X|=3$ and $|Y|=3$ and $|A(D[X \cup Y])| \leq 8$.}

\2

{\bf Proof of Claim~K.} 
For the sake of contradiction assume that we have a reducible pair, $(X,Y)$, in $D$ where $|X|=|Y|=3$ and $|A(D[X \cup Y])| \leq 8$.
Let $T = D[X \cup Y]$ and let $D'=D-V(T)$ and let $F'$ be a minimum feedback vertex set in $D'$
and let $n'$ be the order of $D'$ and let $m'$ be the size of $D'$.
Let $q$ denote the number of ars between $V(T)$ and $V(D) \setminus V(T)$ in $D$.
As $12-q/2 = |A(D[X \cup Y])| \leq 8$ we note that $q \geq 8$.

By Claim~I and Lemma~\ref{Hlemma}(c), we note that any connected component of $D'$ that belongs to ${\cal H}$ must have at least $6$ arcs between it and $V(T)$ in $D$.
Therefore $h(D') \leq q/6$. As $|A(T)|=12-q/2$, we have $n'=n-6$ and $m'=m-(12-q/2)-q = m-12-q/2$.
 We now obtain the following, by induction.

\[
\begin{array}{rcl} \vspace{0.1cm}
\fvs(D) & \leq & \fvs(D')+|X|  \\ \vspace{0.1cm}
       & \leq & \frac{n'+m'+h(D')}{7} + 3  \\ \vspace{0.1cm}
       & \leq & \frac{(n-6)+(m-12-q/2) +q/6}{7} + 3  \\
       & \leq & \frac{n+m-q/3+3}{7} \\
\end{array}
\]

As $q \geq 8$ and $n'$, $m'$ and $h(D')$ are all integers we note that the above implies that $\fvs(D) \leq \frac{n+m}{7}$, a contradiction, thereby proving Claim~K.

\2

{\bf Claim L} {\em Every arc in $D$ belongs to exactly one $3$-cycle.}

\2

{\bf Proof of Claim~L.} First assume that some arc $xy \in A(D)$ belongs to two $3$-cycles $xyux$ and $xywx$ where $u \not=w$. 
and let $X=\{u,w\}$ and let $Y=\{x,y\}$. Note that $(X,Y)$ is a reducible pair in $D$ (as both in-neighbours of $x$ lie in $X$ and both out-neighbours of $y$ lie in $X$),
contradicting Claim~G part~(b). Therefore any arc in $D$ belongs to at most one $3$-cycle.

\begin{figure}[htb]
\begin{center}
\tikzstyle{vertexX}=[circle, draw, top color=black!10, bottom color=black!5, minimum size=14pt, scale=0.75, inner sep=0.8pt]
\tikzstyle{vertexY}=[circle, draw, top color=black!70, bottom color=black!40, minimum size=14pt, scale=0.75, inner sep=0.8pt]
\begin{tikzpicture}[scale=0.3]
  \node at (5.5,-1) {(a)};

  \node (u1) at (1,6) [vertexY]{$u_1$};
  \node (u2) at (5,6) [vertexX]{$u_2$};
  \node (u3) at (9,6) [vertexY]{$u_3$};

  \node (x) at (3,1) [vertexX]{$x$};
  \node (y) at (9,1) [vertexX]{$y$};
  \node (w) at (9,8) [vertexY]{$w$};

  \draw[->, line width=0.03cm] (x) -- (y);
  \draw[->, line width=0.03cm] (u1) -- (x);
  \draw[->, line width=0.03cm] (u2) -- (x);
  \draw[->, line width=0.03cm] (u3) -- (y);

  \draw[->, line width=0.03cm] (u3) -- (u2);
  \draw[->, line width=0.03cm] (w) -- (u2);

\end{tikzpicture} \hspace{2.5cm}
\begin{tikzpicture}[scale=0.3]
  \node at (5.5,-1) {(b)};

  \node (u1) at (1,6) [vertexX]{$u_1$};
  \node (u2) at (5,6) [vertexX]{$u_2$};
  \node (u3) at (9,6) [vertexX]{$u_3$};

  \node (x) at (3,1) [vertexX]{$x$};
  \node (y) at (9,1) [vertexX]{$y$};

  \draw[->, line width=0.03cm, dotted] (u1) -- (2,8);
  \draw[->, line width=0.03cm, dotted] (1,8) -- (u1);
  \draw[->, line width=0.03cm, dotted] (0,8) -- (u1);

  \draw[->, line width=0.03cm, dotted] (u2) -- (6,8);
  \draw[->, line width=0.03cm, dotted] (5,8) -- (u2);
  \draw[->, line width=0.03cm, dotted] (4,8) -- (u2);

  \draw[->, line width=0.03cm, dotted] (u3) -- (10,8);
  \draw[->, line width=0.03cm, dotted] (9,8) -- (u3);
  \draw[->, line width=0.03cm, dotted] (8,8) -- (u3);

  \draw[->, line width=0.03cm, dotted] (x) -- (1,1);

  \draw[->, line width=0.03cm, dotted] (y) -- (11,0.5);
  \draw[->, line width=0.03cm, dotted] (y) -- (11,1.5);

  \draw[->, line width=0.03cm] (x) -- (y);
  \draw[->, line width=0.03cm] (u1) -- (x);
  \draw[->, line width=0.03cm] (u2) -- (x);
  \draw[->, line width=0.03cm] (u3) -- (y);

\end{tikzpicture} 
\caption{The digraph in (a) is a subdigraph of $D$ considered in the proof of Claim~L, where the dark vertices are the vertices in $X$. The digraph in (b) is then shown to be an induced subdigraph of $D$ in the proof of Claim~L.}
\label{ClaimLi}
\end{center}
\end{figure}
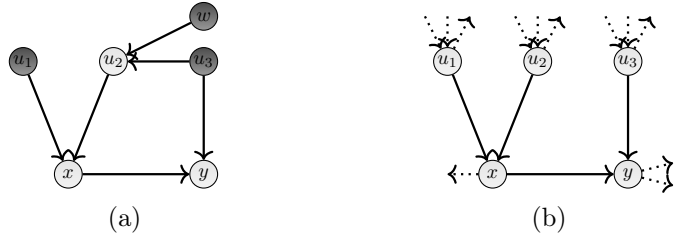

Assume for the sake of contradiction that $xy$ does not belong to any $3$-cycle. And define $u_1,u_2,u_3$ such that $N_D^-(x)=\{u_1,u_2\}$ and $N_D^-(y)=\{x,u_3\}$.
Let $T = D[\{u_1,u_2,u_3,x,y\}]$ and let $D' = D - V(T)$ and let $n'$ be the order of $D'$ and let $m'$ be the size of $D'$.
We first show that $A(T)=\{u_1x,u_2x,xy,u_3y\}$. As $xy$ belongs to no $3$-cycle we have $yu_1,y u_2 \not\in A(T)$. By Claim~H there is no transitive triangle
in $D$ and therefore $u_1y,u_2y,u_1u_2,u_2u_1,u_3x,xu_3 \not\in A(T)$. 
By Claim~J we note that $u_1 u_3, u_2 u_3 \not\in A(T)$. 

For the sake of contradiction assume that $u_3 u_i \in A(T)$ for some $i \in \{1,2\}$.
Without loss of generality assume that $u_3 u_2 \in A(T)$. Let $w$ be defined such that $N^-(u_2)=\{u_3,w\}$ and let
$X=\{w,u_1,u_3\}$ and $Y=\{x,y,u_2\}$ (see Figure~\ref{ClaimLi}(a)). By the above arguments we note that $X \cap Y = \emptyset$ and $|X|=3$ and $|Y|=3$.
Note that $(X,Y)$ is a reducible pair in $D$ (as both in-neighbours of $u_2$ lie in $X$ and both in-neighbours of $x$ lie in $X \cup \{u_2\}$ and
both in-neighbours of $y$ lie in $X \cup \{u_2,x\}$).
We will now show that $|A(D[X \cup Y])| \leq 8$, which will give us a contradiction by Claim~K.

Note that $u_3 u_1 \not\in A(D)$ by Claim~J (as $u_3u_2, u_2x, u_1x \in A(T)$). Therefore $A(T)= \{u_1x,u_2x,xy,u_3y,u_3u_2\}$.
By Claim~H there is no transitive triangle in $D$ and therefore $wu_3,u_3w \not\in A(T)$.
By Claim~H we can also not have both arcs $ xw$ and $yw$ in $T$, which implies that the neighbours of $w$ in $V(T)$ is a proper subset of $\{u_1,u_2,x,y\}$ 
(as $N^-(x)=\{u_1,u_2\}$ and $N^-(y)=\{x,u_3\}$). As $A(T)= \{u_1x,u_2x,xy,u_3y,u_3u_2\}$ and $w$ has at most three arcs incident with vertices in $T$ we note that
$|A(D[X \cup Y])| \leq 8$. Therefore we obtain a contradiction by Claim~K.
This implies that $u_3 u_1, u_3 u_2 \not\in A(T)$. This implies that $A(T)=\{u_1x,u_2x,xy,u_3y\}$, as desired (see Figure~\ref{ClaimLi}(b)).

First consider the case when $h(D')=0$. In this case we note that $n'=n-5$ and $m'=m-16$. 
As adding $\{u_1,u_2,u_3\}$ to any feedback vertex set in $D'$ gives us a feedback vertex set in $D$ we note that the following holds.

\[
\fvs(D) \leq \fvs(D')+3 \leq  \frac{n'+m'+h(D')}{7} + 3  =  \frac{n+m}{7} 
\]

Therefore we must have $h(D') \geq 1$. Let $R$ be a component in $D'$ which belongs to ${\cal H}$. 
As there are twelve arcs between $V(T)$ and $V(D) \setminus V(T)$ we note that $R$ is ether a $3$-cycle or
two $3$-cycles connected by an arc (by Claim~I).

First consider the case when $R$ is two $3$-cycles connected by an arc. In this case there are 10 arcs between $V(T)$ and $V(D) \setminus V(T)$.
This implies that the two in-neighbours, say $a$ and $b$, of $u_r$ belong to $V(R)$ for some $r \in \{1,2,3\}$.
The two in-neighbours, $a$ and $b$, cannot belong to the same $3$-cycle by Claim~H. So $\{a,b\}$ is a feedback vertex set of $R$.
Let $X=\{a,b,u_1,u_2,u_3\} \setminus \{u_r\}$ and let $Y=(V(T) \cup V(R)) \setminus X$ and
note that $(X,Y)$ is a reducible pair in $D$, with $|X|=4$ and $|Y|=7$, a contradiction against Claim~G part~(d).

So $R$ is a $3$-cycle. As there are three arcs from $V(R)$ to $V(T)$ in $D$ and no such arc can enter $\{x,y\}$ (as $d^-(x)=d^-(y)=2$) and
no two such arcs can enter the same vertex in $\{u_1,u_2,u_3\}$ by Claim~H we note that there is an arc from $V(R)$ into every vertex in $\{u_1,u_2,u_3\}$.
As there are also three arcs from $V(T)$ to $V(R)$ in $D$ and by Claim~H at most two of these are adjacent to $\{x,y\}$ we note that 
there exists an arc from $u_j$ to $V(R)$ for some $j \in \{1,2,3\}$. Let $V(R)=\{r_1,r_2,r_3\}$ and let $a$ and $b$ be defined such that
$r_a u_j, u_j r_b \in A(D)$. Note that $a \not= b$ as $D$ is an oriented graph. Without loss of generality assume that $A(R)=\{r_1 r_2, r_2 r_3, r_3 r_1\}$ and
$a=2$, which implies that $b \in \{1,3\}$. Note that $b \not= 3$ by Claim~H and $b \not= 1$ by Claim~J (as if $b=1$ then $r_2 u_j, u_j r_1, r_2 r_3, r_3 r_1 \in A(D)$),
a contradiction.  This completes the proof of Claim~L.

Let $G_D=UG(D)$ be the underlying graph of $D$.
\2

{\bf Claim M} {\em For every $u \in V(G_D)$ the set $N(v)$ is an induced matching of size two.
That is $G_D[N(v)]$ is $1$-regular.}

\2

{\bf Proof of Claim~M.} Let $v$ be any vertex in $G_D$ and let $N(v)=\{u_1,u_2,u_3,u_4\}$ and  let $R=G_D[N(v)]$ (ie the graph induced by $\{u_1,u_2,u_3,u_4\}$ in $G_D$).
First assume that som vertex, say $u_1$, in $R$ has degree at least two in $R$. Without loss of generality assume that $u_1 u_2, u_1 u_3 \in E(R)$ and note
that $C_1 = v u_1 u_2 v$ and $C_2 = v u_1 u_3 v$ are two distinct $3$-cycles in $R$. By Claim~H we note that neither $C_1$ or $C_2$ are transitive triples, 
which implies that $vu_1$ belongs to two distinct $3$-cycles, a contradiction to Claim~L. So $\Delta(R) \leq 1$.

Now assume that some vertex in $R$, say $u_1$, is isolated in $R$. Then no $3$-cyle in $D$ contains the arc between $v$ and $u_1$, a contradiction to Claim~L.
This implies that $\delta(R)=\Delta(R)=1$, which completes the proof of Claim~M.

\2

{\bf Claim N} {\em $G_D$ contains no $4$-cycles.}

\2

{\bf Proof of Claim~N.}  For the sake of contradiction let $C=c_1 c_2 c_3 c_4 c_1$ be a $4$-cycle in $G_D$.
Note that $c_2 c_4, c_1 c_3 \not\in E(G_D)$, by Claim~M, so $C$ is an induced $4$-cycle in $G_D$.
Let $D_1 = D - \{c_1,c_3\}$ and note that both $c_2$ and $c_4$ have degree two in $D_1$.
For each $i \in \{2,4\}$ do the following.

\begin{description}
 \item[Case A:] If $d^+_{D_1}(c_i)=0$ or $d^-_{D_1}(c_i)=0$, then delete $c_i$ from $D_1$.
 \item[Case B:] If $d^+_{D_1}(c_i)=d^-_{D_1}(c_i)=1$, then let $u_i c_i v_i$ be a path in $D_1$ and delete $c_i$ from $D_1$ and add the arc $u_i v_i$ to $D_1$.
\end{description}

  Let the resulting digraph be denoted by $D_2$. Note that in Case~B, when we add the arc $u_i v_i$ we must have $u_iv_i, v_i u_i \not\in A(D_1)$, because of 
Claim~M (when considering the set $N(c_i)$).  For the sake of contradiction assume that we are in Case~B for both $i=2$ and $i=4$ and $\{u_2,v_2\}=\{u_4,v_4\}$.
In this case we must have an arc between a vertex in $\{u_2,v_2\}$ and $\{c_1,c_3\}$ by Claim~M. Without loss of generality assume the arc goes between
$u_2$ and $c_1$. But now the edges $c_1 c_2$ and $c_1 c_4$ both belong to the neighbourhood of $u_2$, a contradiction to Claim~M. So, this implies that 
$D_2$ is an oriented graph. Also, it is not difficult to see that the maximum degree in $D_2$ is at most four.

  Assume that we were in Case~B $q$ times above ($0 \leq q \leq 2$). Let $n_2$ be the order of $D_2$ and let $m_2$ be the size of $D_2$. Now $n_2=n-4$ and 
$m_2 = m - 12+ q$. Furthermore, if $F_2$ is a feeback vertex set in $D_2$ then it is not difficult to see that $F_2 \cup \{c_1,c_3\}$ is a feedback vertex
set in $D$, which implies that $\fvs(D) \leq \fvs(D_2)+2$.  If $h(D_2) \leq 2-q$, then we obtain the following contradiction,by induction.

\[
\begin{array}{rcl} \vspace{0.1cm} 
\fvs(D) & \leq & \fvs(D_2)+2  \\ \vspace{0.1cm}
       & \leq & \frac{n_2+m_2+h(D_2)}{7} + 2  \\ \vspace{0.1cm}
       & \leq & \frac{(n-4)+(m-12+q) + 2-q}{7} + 2  \\
       & \leq & \frac{n+m}{7} \\
\end{array}
\]

So, we may assume that $h(D_2) > 2-q$. Note that $\sum_{x \in V(D_2)} (4-d_{D_2}(x)) = 8 - 2q$. 
If $q=0$ then by Lemma~\ref{Hlemma}(c-d) we note that $h(D_2) \leq 2$ (in fact $h(D_2) \leq 1$ by Claim~I), a contradiction.
If $q=1$ then by Lemma~\ref{Hlemma}(c-d) we note that $h(D_2) \leq 1$, a contradiction.
So we must have $q=2$ and $h(D_2) = 1$, which implies that $D_2$ contains a component, $R$, isomorphic to $H_2$ or $H_3$.
Furthermore by the above we can assume that every $4$-cycle in $G_D$ is an oriented $4$-cycle in $D$, as otherwise we 
could have let $C$ be a $4$-cycle in $G_D$, which is not an oriented $4$ cycle in $D$ and named the vertices in $C$, such that $q \leq 1$.
This means that if there is a transitive triangle in $R$ then some arc in $R$ was an arc of the form $u_i v_i$ where $i \in \{2,4\}$ by Claim~H and in fact
both arcs  arcs $u_2 v_2$ and $u_4 v_4$ must belong to $R$ as otherwise we would have a $4$-cycle in $G_D$ which was not an oriented $4$-cycle in $D$.

\begin{figure}[htb]
\begin{center}
\tikzstyle{vertexX}=[circle, draw, top color=black!20, bottom color=black!10, minimum size=10pt, scale=0.75, inner sep=0.8pt]
\begin{tikzpicture}[scale=0.3]
  \node at (4,-1) {$H_2$};
  \node (x) at (1,1) [vertexX]{$x$};
  \node (y) at (4,1) [vertexX]{$y$};
  \node (z) at (7,1) [vertexX]{$z$};

  \node (a) at (1,5) [vertexX]{$a$};
  \node (b) at (7,5) [vertexX]{$b$};
  \draw[->, line width=0.03cm] (x) -- (y);
  \draw[->, line width=0.03cm] (y) -- (z);
  \draw[->, line width=0.06cm] (z) to [out=210, in=330] (x);
  \draw[->, line width=0.03cm] (a) -- (b);
  \draw[->, line width=0.06cm] (b) -- (x);
  \draw[->, line width=0.06cm] (b) -- (z);
  \draw[->, line width=0.06cm] (x) -- (a);
  \draw[->, line width=0.06cm] (z) -- (a);
\end{tikzpicture} \hspace{1cm}
\begin{tikzpicture}[scale=0.3]
  \node at (4,-1) {$H_3$};
  \node (x) at (1,1) [vertexX]{$x$};
  \node (y) at (4,1) [vertexX]{$y$};
  \node (z) at (7,1) [vertexX]{$z$};

  \node (a) at (1,5) [vertexX]{$a$};
  \node (b) at (7,5) [vertexX]{$b$};
  \draw[->, line width=0.03cm] (x) -- (y);
  \draw[->, line width=0.06cm] (y) -- (z);
  \draw[->, line width=0.06cm] (z) to [out=210, in=330] (x);
  \draw[->, line width=0.03cm] (a) -- (b);
  \draw[->, line width=0.06cm] (b) -- (x);
  \draw[->, line width=0.06cm] (b) -- (z);
  \draw[->, line width=0.06cm] (y) -- (a);
  \draw[->, line width=0.06cm] (z) -- (a);
\end{tikzpicture}
\caption{The digraphs $H_2$ and $H_3$, where the thick arcs belong to transitive triangles.}
\label{H-digraphsII}
\end{center}
\end{figure}
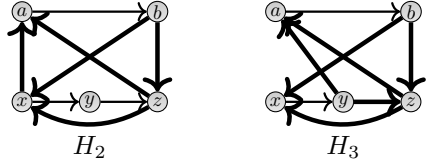

Name the vertices in $R$ as in Figure~\ref{H-digraphsII}, depending on if $R=H_2$ or $R=H_3$.
First assume that $R=H_2$. As both $\{z,x,a\}$ and $\{z,x,b\}$ induce a transitive triangle the arcs $u_2 v_2$ and $u_4 v_4$ both lie within both sets,
which is impossible as the two sets only have one arc in common (the one between $z$ and $x$).
So, $R=H_3$. But now the sets  both $\{z,y,a\}$ and $\{z,x,b\}$ induce a transitive triangles in $R$.
Therefore, the arcs $u_2 v_2$ and $u_4 v_4$ both lie within both sets, a contradiction, as the sets have no arcs in common. This completes the proof of Claim~N.

\2

{\bf Claim O} {\em Any $5$-cycle, $C=c_1 c_2 c_3 c_4 c_5 c_1$, in $G_D$ is a directed cycle in $D$ (this means that 
either $c_1 c_2 c_3 c_4 c_5 c_1$ or $c_1 c_5 c_4 c_3 c_2 c_1$ is a directed cycle in $D$).}

\2

{\bf Proof of Claim~O.}   For the sake of contradiction let $C=c_1 c_2 c_3 c_4 c_5 c_1$ be a $5$-cycle in $G_D$, 
such that $C$ is not a directed $5$-cycle in $D$.
Let $x_i$ be the vertex in $G_D$ such that $\{c_i,c_{i+1},x_i\}$ induces a $3$-cycle in $D$, where all subscripts are taken modulo $5$. Such an $x_i$ exists as
the arc between $c_i$ and $c_{i+1}$ belongs to a unique $3$-cycle in $D$ by Claim~L.
By Claim~N we note that $x_1,x_2,x_3,x_4,x_5$ are distinct vertices. See Figure~\ref{C5pic}.

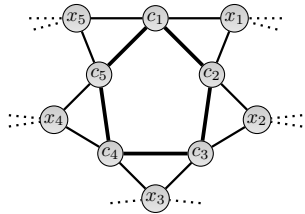
\begin{figure}[htb]
\begin{center}
\tikzstyle{vertexX}=[circle, draw, top color=black!20, bottom color=black!10, minimum size=10pt, scale=0.75, inner sep=0.8pt]
\begin{tikzpicture}[scale=0.3]
  \node (c1) at (3,7) [vertexX]{$c_1$};
  \node (c2) at (5.5,4.5) [vertexX]{$c_2$};
  \node (c3) at (5,1) [vertexX]{$c_3$};
  \node (c4) at (1,1) [vertexX]{$c_4$};
  \node (c5) at (0.5,4.5) [vertexX]{$c_5$};
  \node (x1) at (6.5,7) [vertexX]{$x_1$};
  \node (x2) at (7.5,2.5) [vertexX]{$x_2$};
  \node (x3) at (3,-1) [vertexX]{$x_3$};
  \node (x4) at (-1.5,2.5) [vertexX]{$x_4$};
  \node (x5) at (-0.5,7) [vertexX]{$x_5$};
  \draw[line width=0.05cm] (c1) -- (c2);
  \draw[line width=0.05cm] (c2) -- (c3);
  \draw[line width=0.05cm] (c3) -- (c4);
  \draw[line width=0.05cm] (c4) -- (c5);
  \draw[line width=0.05cm] (c5) -- (c1);

  \draw[line width=0.03cm] (c1) -- (x1);
  \draw[line width=0.03cm] (c2) -- (x1);

  \draw[line width=0.03cm] (c2) -- (x2);
  \draw[line width=0.03cm] (c3) -- (x2);

  \draw[line width=0.03cm] (c3) -- (x3);
  \draw[line width=0.03cm] (c4) -- (x3);

  \draw[line width=0.03cm] (c4) -- (x4);
  \draw[line width=0.03cm] (c5) -- (x4);

  \draw[line width=0.03cm] (c5) -- (x5);
  \draw[line width=0.03cm] (c1) -- (x5);

  \draw[line width=0.03cm, dotted] (8.6,7) -- (x1);
  \draw[line width=0.03cm, dotted] (8.4,6.5) -- (x1);
  \draw[line width=0.03cm, dotted] (9.5,2.7) -- (x2);
  \draw[line width=0.03cm, dotted] (9.5,2.2) -- (x2);
  \draw[line width=0.03cm, dotted] (1,-1.2) -- (x3);
  \draw[line width=0.03cm, dotted] (5,-1.2) -- (x3);
  \draw[line width=0.03cm, dotted] (-3.5,2.7) -- (x4);
  \draw[line width=0.03cm, dotted] (-3.5,2.2) -- (x4);
  \draw[line width=0.03cm, dotted] (-2.6,7) -- (x5);
  \draw[line width=0.03cm, dotted] (-2.4,6.5) -- (x5);
\end{tikzpicture} 
\caption{The subgraph of $G_D$ used in the proof of Claim~O.}
\label{C5pic}
\end{center}
\end{figure}
 
Let $X=\{x_1,x_2,x_3,x_4,x_5\}$ and $Y=V(C)$.
By Claim~N we also note that $X$ is an independent set and also by Claim~N we note that $C$ is an induced cycle in $G_D$, as any chord of $C$ would give us a $4$-cycle.
As $C$ is not a directed $5$-cycle in $D$  we note that $(X,Y)$ is a reducible pair in $D$.
Let $D'=D - (X \cup Y)$  and let $n'$ be the order of $D'$ and let $m'$ be the size of $D'$.
Note that $m'=m-25$ and $n'=n-10$. 
If $h(D') =0$, then we obtain the following contradiction,by induction.

\[
\begin{array}{rcl} \vspace{0.1cm}
\fvs(D) & \leq & \fvs(D')+ |X| \\ \vspace{0.1cm}
       & \leq & \frac{n'+m'+h(D')}{7} + 5  \\ \vspace{0.1cm}
       & = & \frac{(n-10)+(m-25)}{7} + 5  \\
       & = & \frac{n+m}{7} \\
\end{array}
\]

So, $h(D') > 0$. Let $H'$ be a component of $D'$ such that $H' \in {\cal H}$.
By Claim~I we note that $H'$ contains no copy of $H_2$ or $H_3$. 
So by the construction of ${\cal H}$, there must be a $3$-cycle $y_1 y_2 y_3 y_1$ in $H'$ such that $y_1$ has degree two in $H'$.
As $D$ is $2$-regular this implies that there exists some $x_i, x_j \in X$ such that $N_D^+(y_1)=\{y_2,x_j\}$ and $N_D^-(y_1)=\{y_3,x_i\}$.
By Claim~M we note that $x_j x_i \in A(D)$, contradicting the fact that $X$ is an independent set in $D$. This completes the proof of Claim~O.

\2

{\bf Claim P} {\em We obtain a contradiction.}

\2

{\bf Proof of Claim~P.} Let $x_2$ be any vertex in $V(D)$ and let $N_D^+(x_2)=\{y_1,y_2\}$. Define $x_1$ and $x_3$, such that 
$N_D^-(y_1)=\{x_1,x_2\}$ and $N_D^-(y_2)=\{x_2,x_3\}$. Note that $x_1 \not= x_3$ by Claim~N.
Let $X=\{x_1,x_2,x_3\}$ and $Y=\{y_1,y_2\}$. By Claim~H and Claim~O we note that $X$ and $Y$ are independent sets in $D$.
By Claim~N we note that $A(D[X \cup Y])=\{x_1 y_1, x_2 y_1, x_2 y_2, x_3 y_2\}$ (see Figure~\ref{ClaimPi}).

\begin{figure}[htb]
\begin{center}
\tikzstyle{vertexX}=[circle, draw, top color=black!20, bottom color=black!10, minimum size=10pt, scale=0.75, inner sep=0.8pt]
\begin{tikzpicture}[scale=0.3]
  \node (x1) at (1,9) [vertexX]{$x_1$};
  \node (x2) at (1,5) [vertexX]{$x_2$};
  \node (x3) at (1,1) [vertexX]{$x_3$};

  \node (y1) at (6,8) [vertexX]{$y_1$};
  \node (y2) at (6,2) [vertexX]{$y_2$};

  \draw[->, line width=0.03cm] (x1) -- (y1);
  \draw[->, line width=0.03cm] (x2) -- (y1);
  \draw[->, line width=0.03cm] (x2) -- (y2);
  \draw[->, line width=0.03cm] (x3) -- (y2);

  \draw[->, line width=0.03cm, dotted] (-2,10) -- (x1);
  \draw[->, line width=0.03cm, dotted] (-2,9) -- (x1);
  \draw[->, line width=0.03cm, dotted] (x1) -- (-2,8);

  \draw[->, line width=0.03cm, dotted] (-2,5.5) -- (x2);
  \draw[->, line width=0.03cm, dotted] (x2) -- (-2,4.5);

  \draw[->, line width=0.03cm, dotted] (-2,2) -- (x3);
  \draw[->, line width=0.03cm, dotted] (-2,1) -- (x3);
  \draw[->, line width=0.03cm, dotted] (x3) -- (-2,0);

  \draw[->, line width=0.03cm, dotted] (y1) -- (9,8.5);
  \draw[->, line width=0.03cm, dotted] (y1) -- (9,7.5);

  \draw[->, line width=0.03cm, dotted] (y2) -- (9,2.5);
  \draw[->, line width=0.03cm, dotted] (y2) -- (9,1.5);
\end{tikzpicture}
\caption{a subgraph of $D$ used in the proof of Claim~P.}
\label{ClaimPi}
\end{center}
\end{figure}
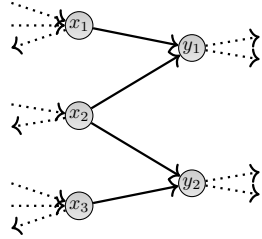

Let $D'=D - (X \cup Y)$  and let $n'$ be the order of $D'$ and let $m'$ be the size of $D'$.
Note that $m'=m-16$ and $n'=n-5$. 
If $h(D') =0$, then we obtain the following contradiction,by induction.
  
\[
\begin{array}{rcl} \vspace{0.1cm}
\fvs(D) & \leq & \fvs(D')+ |X| \\ \vspace{0.1cm}
       & \leq & \frac{n'+m'+h(D')}{7} + 3  \\ \vspace{0.1cm}
       & = & \frac{(n-5)+(m-16)}{7} + 3  \\
       & = & \frac{n+m}{7} \\
\end{array}
\]
  
So, $h(D') > 0$. Let $H'$ be a component of $D'$ such that $H' \in {\cal H}$.
By Claim~I we note that $H'$ contains no copy of $H_2$ or $H_3$.
Let $Z=\{z_1,z_2, \ldots, z_l\}$ contain all vertices in $H'$ of degree two in $H'$.
By the construction of ${\cal H}$ we note that $|Z| \geq 3$ and every vertex in $Z$ (and in $H'$) belongs to a $3$-cycle in $H'$.
For each $i=1,2,\ldots,l$ there exist vertices $s_i,t_i \in V(D) \setminus V(H')$, such that $z_i s_i t_i z_i$ is a $3$-cycle in $D$ (by Claim~H and Claim~M and the fact that
$z_i$ belongs to a $3$-cycle within $H'$). As $s_i,t_i \in X \cup Y$ and $s_i t_i \in A(D)$ we note that $t_i \in Y$ (and $s_i \in X$).
This implies that $|Z| \leq 4$.
Furthermore, at least two of the vertices in $t_1,t_2,\ldots,t_l$ are identical as $l \geq 3$ and $|Y|=2$. Without loss of generality assume that $t_1=t_2$.
As $|Z| \leq 4$ we not that $|V(H')|=3$ or $|V(H')|=6$ and the distance in $G_D$ between any two vertices in $H'$ is at most $3$.
Now a shortest path between $z_1$ and $z_2$ together with the arcs $t_1 z_1$ and $t_2 z_2$ ($t_1=t_2$) form a $3$-cycle or a $4$-cycle or a $5$-cycle in $G_D$, contradicting
Claim~H or Claim~N or Claim~O. 
This completes the proof of Claim~P and therefore also of the theorem.
\end{proof}


	\subsection{Digraphs}\label{4direct}
Let $\mathcal{C}_o$ denote the class of digraphs obtained from an undirected odd cycle by replacing each edge with a directed 2-cycle. For a feedback arc set $F$, let $c(F)$ denote the number of odd cycles contained in $D[F]$. We have the following:

\begin{theorem}\label{thm:fvs4di}
Let $D$ be a connected digraph of order $n$ with $\Delta(D) \le 4$. Then $\fvs(D) \le \frac{n}{2}$ unless $D \in \mathcal{C}_o$.
	\end{theorem}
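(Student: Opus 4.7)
The plan is to induct on $n$, with $n \le 2$ trivial. The central object is the auxiliary undirected graph $H$ on $V(D)$ whose edges are the $2$-cycles of $D$; since $\Delta(D) \le 4$, each vertex of $D$ lies in at most two $2$-cycles, giving $\Delta(H) \le 2$, so $H$ decomposes into paths, cycles, and isolated vertices. Two sub-cases are then immediate: if $D$ is oriented (equivalently, $H$ is edgeless), Corollary~\ref{mainCor} yields $\fvs(D) \le 3n/7 < n/2$; if $H$ contains a cycle $C$, then every vertex of $V(C)$ has $d_H = 2$ and hence full $D$-degree $4$, so connectedness of $D$ forces $V(C) = V(D)$ and $D$ is a bidirected cycle. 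The hypothesis $D \notin \mathcal{C}_o$ then forces this cycle to be even, and $\fvs(D) = n/2$ directly.

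So we may assume $H$ is a disjoint union of nontrivial paths. The main split is on endpoint degrees. Suppose first that some $2$-cycle $\{u,v\}$ satisfies $d^-_D(v) = 1$ or $d^+_D(v) = 1$; then every cycle through $v$ must also pass through $u$, so $\{u\} \cup F'$ is an FVS of $D$ for any FVS $F'$ of $D' = D - \{u,v\}$. A standard degree-and-connectivity argument rules out any $\mathcal{C}_o$ component in $D'$: the vertices of such a component would already have their $D$-degree $4$ realised internally, hence no arcs to $\{u,v\}$, making the component a separate component of $D$ and contradicting connectedness. Applying the inductive hypothesis to each component of $D'$ gives $\fvs(D') \le (n-2)/2$, and hence $\fvs(D) \le n/2$.

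Otherwise every $2$-cycle endpoint has $d^+ = d^- = 2$. Fix any $H$-path $P = p_1 \cdots p_k$; the endpoints $p_1, p_k$ each have exactly one external in-neighbour $y_i$ and one external out-neighbour $x_i$, while the interior vertices of $P$ have all four arcs inside $P$. I would take the alternating vertex cover $F_P = \{p_{2j} : 1 \le j \le \lfloor k/2 \rfloor\}$ of $P$, of size $\lfloor k/2 \rfloor$, and form $D^{**}$ from $D - V(P)$ by adding the shortcut arc $y_1 \to x_1$ (and also $y_k \to x_k$ when $p_k \notin F_P$). A direct check shows $F_P \cup F^{**}$ is an FVS of $D$ for any FVS $F^{**}$ of $D^{**}$: cycles wholly outside $V(P)$ pull back to $D^{**}$, while cycles entering $V(P)$ either meet $F_P$ (because any arc leaving $V(P) \setminus F_P$ ends in $F_P$) or are modelled in $D^{**}$ by the appropriate shortcut. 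Together with induction this gives $\fvs(D) \le \lfloor k/2 \rfloor + (n-k)/2 \le n/2$, provided $D^{**}$ has no $\mathcal{C}_o$ component.

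The main obstacle is exactly this last requirement. Adding the shortcut $y_1 \to x_1$ can, together with an existing odd $H$-path $Q$ of $D$ whose endpoints are $x_1$ and $y_1$, close $Q$ up into a bidirected odd cycle in $D^{**}$. Unpacking this forces a rigid bundle structure: $Q$ has odd length, the arc $x_1 \to y_1$ is already in $D$, and $p_1 x_1 y_1$ forms a $3$-cycle of $D$. In that case I would switch to reducing on $B = \{p_1\} \cup V(Q)$, taking $F_B$ to be an alternating vertex cover of $Q$ together with $p_1$, of size $(|V(Q)|+1)/2 = |B|/2$; this covers every cycle of $D$ meeting $B$, because the only external access of $V(Q)$ to $V(D) \setminus B$ runs through $p_1$. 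Applying induction to $D - B$ (which again has no $\mathcal{C}_o$ component by the same degree-connectivity argument, and is handled component by component) finishes with $\fvs(D) \le |B|/2 + (n - |B|)/2 = n/2$.
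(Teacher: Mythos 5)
Your route is genuinely different from the paper's: the paper takes a minimum feedback arc set $F$, proves $\Delta(D[F])\le 2$ together with the degree condition of Claim~\ref{claim41}, then re-chooses a minimum feedback arc set $F^*$ minimising the number of odd cycles in $D[F^*]$ and shows $D[F^*]$ is bipartite unless $D\in\mathcal{C}_o$ (Claim~\ref{claim2}); the smaller colour class is then a feedback vertex set of size at most $n/2$. Your inductive reduction along the auxiliary graph $H$ of $2$-cycles is a plausible alternative, and its early cases (oriented $D$ via Corollary~\ref{mainCor}, $H$ containing a cycle, a $2$-cycle endpoint with in- or out-degree $1$) are sound. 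But the crucial shortcut step, which is exactly where the $\mathcal{C}_o$ obstruction lives, has concrete gaps.

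First, your claim that the endpoints of the $H$-path $P=p_1\cdots p_k$ have \emph{external} in- and out-neighbours can fail: the interior vertices of $P$ are saturated, but for $k\ge 3$ a single arc between $p_1$ and $p_k$ is possible, and then (say) $y_1=p_k$ is a deleted vertex, so $D^{**}$ as you describe it is not even defined. Second, and more seriously, your classification of how a $\mathcal{C}_o$ component of $D^{**}$ can arise is incomplete. Such a component must contain the reverse of a shortcut, but that reverse need not be an arc of $D$: when $k$ is odd you add two shortcuts, and the reverse of $y_1x_1$ may be the other shortcut $y_kx_k$ (i.e.\ $y_k=x_1$ and $x_k=y_1$), or both shortcuts may lie on the same bidirected odd cycle with each reverse being an arc of $D$. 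In these configurations your fallback reduction on $B=\{p_1\}\cup V(Q)$ breaks down, because the key assertion that the only external access of $V(Q)$ to $V(D)\setminus B$ runs through $p_1$ is false: $V(Q)$ then also attaches to $p_k$ via the arcs between $y_k,x_k$ and $p_k$. (One can check that these leftover configurations force $V(D)=V(P)\cup V(Q)$ and admit an explicit feedback vertex set of size $n/2$, but that analysis is absent from your argument.) Until these cases are handled, the induction does not close.
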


\begin{proof}
Let $D\notin \mathcal{C}_o$ and $F\subseteq A(D)$ be a minimum feedback arc set of $D$. 
\begin{claim}\label{claim41}
			$\Delta(D[F]) \le 2$ and if $d_{D[F]}(v)=2$, then $d^{+}_{D}(v)=d^{-}_{D}(v)=2$. 
		\end{claim}
		\begin{proof}
We first show that $\Delta(D[F]) \le 2$. Assume for contradiction that there exists a
vertex $v$ in $D[F]$ with total degree $d_{D[F]}(v)=d_{D[F]}^-(v)+d_{D[F]}^+(v)\ge 3$. Since $\Delta(D)\le4$, 
$\min\{d^+(v),d^-(v)\}\le 2 < 3$. Without loss of generality, assume that $d^-(v)\le d^+(v)$. Construct an arc set $F'$ from $F$ by replacing the out-arcs of $v$ with in-arcs of $v$. Since every cycle containing $v$ must include at least one out-arc of $v$, $F'$ is a feedback arc set of $D$. Since $d^-(v)\le 2 < 3$ and $d_{D[F]}(v)\ge 3$, we have $|F'|<|F|$, contradicting the minimality of $F$.

Now we prove the second part of the claim. As $\Delta(D) \le 4$, it suffices to show $\min\{d^+(v), d^-(v)\} > 1$. Suppose for contradiction that there exists a
vertex $v$ in $D[F]$ with $\min\{d^+(v),d^-(v)\}\le 1$.  Without loss of generality, assume that $d^-(v)\le d^+(v)$. 
Construct an arc set $F''$ from $F$ by replacing the out-arcs of $v$ with in-arcs of $v$. Since every cycle containing $v$ must include at least one out-arc of $v$, $F''$ is a feedback arc set of $D$. Since $d^-(v)\le 1 < 2$ and $d_{D[F]}(v)=2$, we have $|F''|<|F|$, a contradiction to the minimality of $F$.
\end{proof}
Let $F^*\subseteq A(D)$ be a minimum feedback arc set of $D$ with minimum $c(F^*)$. 
\begin{claim}\label{claim2}
			$D[F^*]$ is bipartite. 
		\end{claim}
\begin{proof}
Suppose that there exists an odd cycle $C=x_1,x_2,\cdots,x_{2k+1}$. Clearly, there exists a vertex $x_i$ such that $x_{i-1}x_i,x_ix_{i+1}\in A(D[F^*])$. By Claim~\ref{claim41}, there is a vertex $x_i^- \neq x_{i-1}$ such that $x_i^-x_i\in A(D)$. 
Now replace ${x_ix_{i+1}}$ with ${x_i^-x_i}$ to obtain another minimum feedback arc set, say $F'$. Note that if $x_i^-\in V(D\setminus C)$, then $c(F')<c(F)$, it is a contradiction. So, we may assume $x_{i^-}\in V(C)$. 
If $x_i^-\in V(C)\setminus\{x_i,x_{i-1},x_{i+1}\}$, then $d_{D[F']}(x_i^-)=3$, contradicting $\Delta(D[F']) \le 2$. 
Thus, $x_i^-=x_{i+1}$ and $x_{i+1}x_i\in A(D)$. By symmetry, $x_ix_{i-1}\in A(D)$. Hence, $x_i$ is incident to two 2-cycles in $D$. If $x_{i+2}x_{i+1}\in A(C)$, then $F_1=(F^*\setminus \{x_ix_{i+1}\})\cup \{x_{i+1}x_i\}$ is a minimum feedback arc set of $D$ with $c(F_1)=c(F^*)$. Similarly, since $x_{i+2}x_{i+1},x_{i+1}x_{i}\in A(D[F_1])$, $x_{i+1}$ is incident to two 2-cycles in $D$. If $x_{i+1}x_{i+2}\in A(D)$, then $x_ix_{i+1}, x_{i+1}x_{i+2}\in A(D)$, by the same argument we have $x_{i+1}$ is incident to two 2-cycles in $D$. And we keep doing like above, so for all $x_i\in V(C)$, $i\in [2k+1]$, $x_i$ is incident to two 2-cycles in $D$. Hence, $D\in \mathcal{C}_o$, a contradiction.
\end{proof}

Since $D[F^*]$ is bipartite, one of the parts, say $X$, contains at most half of the vertices. So, $\fvs(D) \le  \frac{|V(D)|}{2} $.
\end{proof}
        
This theorem is tight due to a digraph of order $5$ with feedback vertex set equal to 2, (see Figure~\ref{digraph4}).
    
\begin{figure}[htb]
\begin{center}
\tikzstyle{vertexX}=[circle, draw, top color=black!20, bottom color=black!10, minimum size=10pt, scale=0.75, inner sep=0.8pt]    
\begin{tikzpicture}[>=Stealth, thick, 
    every node/.style={circle,draw,minimum size=8mm,inner sep=1pt}]

\node (v1) at (90:2)  {$v_1$};
\node (v2) at (18:2)  {$v_2$};
\node (v3) at (-54:2) {$v_3$};
\node (v4) at (-126:2){$v_4$};
\node (v5) at (-198:2){$v_5$};

\draw[->,bend left=15]  (v1) to (v2); 

\draw[->,bend left=15]  (v2) to (v3);
\draw[->,bend left=15]  (v3) to (v2);

\draw[->,bend left=15]  (v3) to (v4);
\draw[->,bend left=15]  (v4) to (v3);

\draw[->,bend left=15]  (v4) to (v5);
\draw[->,bend left=15]  (v5) to (v4);

\draw[->,bend left=15]  (v5) to (v1);
\draw[->,bend left=15]  (v1) to (v5);

\end{tikzpicture}
\end{center}
\caption{a digraph of order $5$ with feedback vertex set equal to 2}
\label{digraph4}
\end{figure}
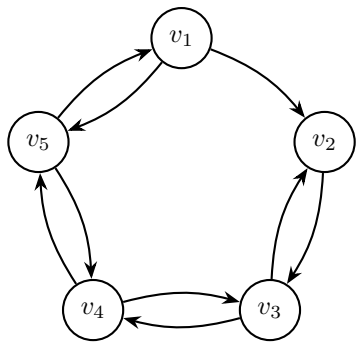
\noindent

\section{Oriented and Directed Graphs with $\Delta\leq 5$}

In Subsections \ref{sec:5or} and \ref{sec:5di}, we consider orgraphs and digraphs, respectively.

\subsection{Orgraphs}\label{sec:5or}
\begin{lemma}\label{lemma1}
     Let $D$ be a connected orgraph with $\Delta(D) \le 5$. Then $D$ has a feedback arc set $F$ such that $D[F]$ is bipartite.
    \end{lemma}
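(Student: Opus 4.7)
My plan is to follow the two-step strategy of Theorem~\ref{thm:fvs4di} (Claims~\ref{claim41} and~\ref{claim2}), adapted to orgraphs with $\Delta\le 5$. Step~1: take $F$ to be a minimum feedback arc set of $D$ and show $\Delta(D[F])\le 2$ by the swap of Claim~\ref{claim41}. If $d_{D[F]}(v)\ge 3$ for some $v$, then since $\Delta(D)\le 5$ we have $\min\{d^+(v),d^-(v)\}\le 2<d_{D[F]}(v)$, so replacing all of $v$'s out-arcs in $F$ by all of $v$'s in-arcs (or symmetrically) produces a strictly smaller feedback arc set, contradicting minimality. Only the arithmetic bound changes compared to Claim~\ref{claim41}.

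Step~2: among all minimum feedback arc sets, pick $F^*$ minimising $c(F^*)$, the number of undirected odd cycles in $D[F^*]$. Suppose for contradiction $c(F^*)\ge 1$ and let $C=x_1x_2\cdots x_{2k+1}$ be an odd cycle in $D[F^*]$. A parity count on $C$ (the number of pass-through vertices on $C$ has the same parity as $|C|$, hence is odd and positive) yields a vertex $x_i$ with $x_{i-1}x_i,\,x_ix_{i+1}\in A(D[F^*])$. Another Claim~\ref{claim41}-type swap at $x_i$, together with the minimality of $|F^*|$, forces $d^-(x_i)\ge 2$ and $d^+(x_i)\ge 2$, so $(d^-(x_i),d^+(x_i))\in\{(2,2),(2,3),(3,2)\}$.

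When $d^-(x_i)=2$, I follow the Claim~\ref{claim2}-swap: with $x_i^-\ne x_{i-1}$ the other in-neighbour of $x_i$, the set $F'=(F^*\setminus\{x_ix_{i+1}\})\cup\{x_i^-x_i\}$ is again a minimum feedback arc set, so by step~1 $\Delta(D[F'])\le 2$. This forces $x_i^-\notin V(C)\setminus\{x_{i-1},x_{i+1}\}$ (otherwise $d_{D[F']}(x_i^-)=3$), and $x_i^-\ne x_{i+1}$ because $D$ is an orgraph and so contains no 2-cycle on $\{x_i,x_{i+1}\}$. Hence $x_i^-\notin V(C)$, the swap destroys $C$ without creating a new cycle, and $c(F')<c(F^*)$, contradicting the choice of $F^*$. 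The case $d^+(x_i)=2$ is handled symmetrically via $F'=(F^*\setminus\{x_{i-1}x_i\})\cup\{x_iw\}$ with $w\ne x_{i+1}$ the other out-neighbour of $x_i$; the two swaps together cover the three subcases $(2,2),(2,3),(3,2)$.

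The main obstacle, and the only genuinely new ingredient compared to the $\Delta\le 4$ argument of Theorem~\ref{thm:fvs4di}, is the asymmetric case $(d^-(x_i),d^+(x_i))=(3,2)$: the Claim~\ref{claim2}-swap on the ``in'' side fails here because $x_i$ has two in-neighbours outside $F^*$ rather than one, so one must run the symmetric ``out'' swap instead. The orgraph hypothesis is used precisely at the step ruling out $x_i^-=x_{i+1}$ (and $w=x_{i-1}$), playing the same role as the exclusion of $\mathcal{C}_o$ in Theorem~\ref{thm:fvs4di}.
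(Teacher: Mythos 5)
Your proof is correct and is essentially the paper's own argument: both take a minimum feedback arc set, use $\Delta(D)\le 5$ to force $\Delta(D[F])\le 2$ (your Step~1 is exactly Claim~\ref{claim1}), and at a pass-through vertex of an odd cycle perform the same single-arc exchange on the side whose degree is $2$, with the orgraph condition ruling out the bad endpoint ($x_i^-=x_{i+1}$, resp.\ $w=x_{i-1}$). The only difference is packaging: you choose $F^*$ extremal for the number of odd cycles (as in Claim~\ref{claim2} of Theorem~\ref{thm:fvs4di}) and argue the new endpoint lies off $C$, so the added arc joins two distinct components of $D[F^*]$ and creates no cycle, whereas the paper iterates the exchange and checks directly that any new odd cycle would have to pass through the vertex $u=x_{i+1}$, which has degree $1$ in $D[F']$ --- two interchangeable ways of making the same decreasing step rigorous.
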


\begin{proof}
Let $F\subseteq A(D)$ be a minimum feedback arc set of $D$. 
\begin{claim}\label{claim1}
			$\Delta(D[F]) \le 2$. 
		\end{claim}
		\begin{proof}
Assume for contradiction that there exists a
vertex $v$ in $D[F]$ with total degree $d_{D[F]}(v)=d_{D[F]}^-(v)+d_{D[F]}^+(v)\ge 3$. Since $\Delta(D)\le5$, 
$\min\{d^+(v),d^-(v)\}\le 2 < 3$. Without loss of generality, we assume that $d^-(v)\le d^+(v)$. Construct an arc set $F'$ from $F$ by replacing the out-arcs of $v$ with in-arcs of $v$. Since every cycle containing $v$ must include at least one out-arc of $v$, $F'$ is a feedback arc set of $D$. Since $d^-(v)\le 2 < 3$ and $d_{D[F]}(v)\ge 3$, we have $|F'|<|F|$, contradicting the minimality of $F$.
\end{proof}
If $D[F]$ contains no odd cycle, we are done. Thus, assume there exists an odd cycle $C$. Clearly, there exists a vertex $v\in V(D[F])$ with $d_{D[F]}^-(v)=d_{D[F]}^+(v)=1$. Replace the two incident arcs with in-arcs of $v$ if $d^-(v)\le d^+(v)$, otherwise with out-arcs of $v$. 

We claim that this replacement creates no new odd cycles, and the resulting set is still a minimum feedback arc set. Without loss of generality, suppose we replace ${vu}$ with ${u'v}$. Clearly, the resulting arc set, denoted by $F'$, remains a minimum feedback arc set. Thus, we only need to prove that this replacement does not create new odd cycles. Suppose, for contradiction, that the replacement creates a new odd cycle $C' \subseteq D[F']$, then $u'v\in A(C')$. Since $F'$ is still a minimum feedback arc set, Claim~\ref{claim1} implies that $\Delta(D[F']) \le 2$. Thus, $C'$ must contain the vertex $u$. However, $d_{D[F']}(u)=1$, a contradiction.

Repeat this replacement until the resulting set induces no odd cycles. Let the final set be denoted by $F''$, then $F''$ is a feedback arc set such that $D[F'']$ is bipartite.
\end{proof}

\begin{theorem}\label{thm:fvs5o}
Let $D$ be an orgraph of order $n$ with $\Delta(D) \le 5$. Then $\fvs(D) \le \frac{n}{2}$.
	\end{theorem}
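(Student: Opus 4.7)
The plan is to deduce Theorem~\ref{thm:fvs5o} directly from Lemma~\ref{lemma1} via the standard bridge between feedback arc sets and feedback vertex sets: if $F$ is a feedback arc set of $D$, then any vertex set $S \subseteq V(D)$ that meets every arc of $F$ (i.e., contains at least one endpoint of each $f \in F$) is a feedback vertex set of $D$, since every directed cycle of $D$ must traverse some arc of $F$.

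First I would reduce to the case when $D$ is connected. The bound $\fvs(D) \le n/2$ is additive over connected components, so it suffices to prove it for connected $D$; the hypothesis of Lemma~\ref{lemma1} then applies.

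Next I would invoke Lemma~\ref{lemma1} to obtain a feedback arc set $F \subseteq A(D)$ such that the underlying graph of $D[F]$ is bipartite. Let $(X, Y)$ be a bipartition with every arc of $F$ crossing between the two parts; vertices of $D$ that are not incident to any arc of $F$ may be assigned to either side, so I can arrange $|X| \le \lfloor n/2 \rfloor$. Because every arc of $F$ has one endpoint in $X$ and one in $Y$, the set $X$ meets every arc of $F$. By the bridging observation above, $X$ is a feedback vertex set of $D$, and therefore $\fvs(D) \le |X| \le n/2$.

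Since the substantive work has already been done in Lemma~\ref{lemma1}, there is essentially no obstacle in the proof of the theorem itself; the only routine point is assigning vertices isolated in $D[F]$ to the appropriate side of the bipartition so that the chosen side has at most $n/2$ elements.
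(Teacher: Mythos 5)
Your proposal is correct and follows essentially the same route as the paper: invoke Lemma~\ref{lemma1} to get a feedback arc set $F$ with $D[F]$ bipartite, take the smaller side $X$ of the bipartition, and observe that $D - X$ is a subdigraph of the acyclic digraph $D - F$. Your explicit handling of vertices not incident to $F$ is a minor point the paper leaves implicit, but the argument is the same.
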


\begin{proof}
We may assume without loss of generality that $D$ is connected. By Lemma~\ref {lemma1}, there exists a feedback arc set $F$ such that $D[F]$ is bipartite. Since $D[F]$ is bipartite, one of the parts, say $X$, contains at most half of the vertices. It is enough to show that $X$ is a feedback vertex set of $D$. Observe that $D\setminus X \subseteq D\setminus F$, since all arcs in $F$ have at least one endpoint in $X$. $D\setminus F$ is acyclic by definition of $F$. Then $D\setminus X$ is acyclic. Therefore, $X$ is a feedback vertex set.
\end{proof}

This theorem is tight due to a orgraph of order $6$ with feedback vertex set equal to 3, see Figure~\ref{orgraph5}.
    
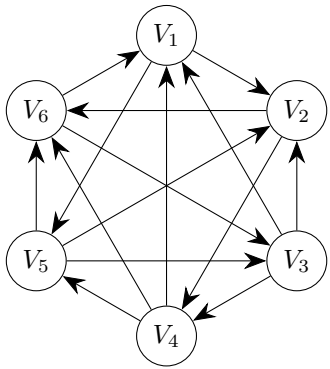
\begin{figure}[htb]
\tikzstyle{vertexX}=[circle, draw, top color=black!20, bottom color=black!10, minimum size=10pt, scale=0.75, inner sep=0.8pt] 
\begin{center}
\begin{tikzpicture}[scale=1, every node/.style={circle, draw, minimum size=0.3cm}]

\node (V1) at (0,2) {$V_1$};
\node (V2) at (1.732,1) {$V_2$};
\node (V3) at (1.732,-1) {$V_3$};
\node (V4) at (0,-2) {$V_4$};
\node (V5) at (-1.732,-1) {$V_5$};
\node (V6) at (-1.732,1) {$V_6$};

\foreach \i/\j in {V1/V2,V3/V1,V4/V1,V1/V5,V6/V1,
                    V3/V2,V2/V4,V5/V2,V2/V6,
                    V3/V4,V5/V3,V6/V3,
                    V4/V5,V4/V6,
                    V5/V6} {
    \draw[-{Stealth[length=3mm]}] (\i) -- (\j);
}
\end{tikzpicture}
\end{center}
\caption{an orgraph of order $6$ with feedback vertex set equal to 3}
\label{orgraph5}
\end{figure}
\noindent

\subsection{Digraphs}\label{sec:5di}

\begin{theorem}\label{thm:fvs5di}
Let $D$ be a digraph of order $n$ with $\Delta(D) \le 5$. Then $\fvs(D) \le \frac{2n}{3}$.
	\end{theorem}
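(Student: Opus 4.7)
The plan is to imitate the strategy of Lemma~\ref{lemma1} and Theorem~\ref{thm:fvs5o}, but to relax the goal from making $UG(D[F])$ bipartite (which is what gives $n/2$ in the orgraph case) to making it only $3$-chromatic, which already suffices for the bound $2n/3$. The reason for the relaxation is that the presence of $2$-cycles obstructs the bipartization step of Lemma~\ref{lemma1}: the arc swap used there to destroy an odd cycle of $D[F]$ could reroute through an existing $2$-cycle and create a new odd cycle, so the orgraph argument breaks down for general digraphs.

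First I would pick a minimum feedback arc set $F\subseteq A(D)$ and prove $\Delta(D[F])\le 2$ by the same swap argument as Claim~\ref{claim1}. If some vertex $v$ has $d_{D[F]}(v)\ge 3$, then $\Delta(D)\le 5$ forces $\min\{d_D^+(v),d_D^-(v)\}\le 2$; WLOG $d_D^-(v)\le 2$. Form $F'$ by removing from $F$ all out-arcs of $v$ and adding all in-arcs of $v$ in $D$. Any cycle through $v$ uses an in-arc of $v$ and is therefore killed by $F'$, while any cycle avoiding $v$ was already killed by $F\setminus\{\text{out-arcs of }v\}\subseteq F'$, so $F'$ is a feedback arc set; and $|F'|-|F|=d_D^-(v)-d_{D[F]}(v)<0$, contradicting the minimality of $F$. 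Only $\Delta(D)\le 5$ is used, so the argument applies to arbitrary digraphs.

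Once $\Delta(UG(D[F]))\le 2$, the underlying graph $UG(D[F])$ is a disjoint union of paths, cycles, and isolated vertices, and so is properly $3$-colorable. Fix a $3$-coloring $V(D)=X_1\cup X_2\cup X_3$ and let $X$ be a largest color class, so $|X|\ge n/3$. Since $X$ is independent in $UG(D[F])$, the subdigraph $D[X]$ contains no arc of $F$, hence $D[X]\subseteq D-F$ is acyclic, and therefore $V(D)\setminus X$ is a feedback vertex set of $D$ of size at most $2n/3$.

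I do not expect a genuine obstacle here; the only step requiring any care is verifying that the swapped set $F'$ is still a feedback arc set, and this reduces to the observation that every cycle through $v$ must use one of $v$'s in-arcs. Conceptually, the jump of the bound from $n/2$ in Theorem~\ref{thm:fvs5o} to $2n/3$ here corresponds exactly to the jump of the chromatic number of $UG(D[F])$ from $2$ to $3$; that this weakening is unavoidable is witnessed by the $3$-vertex digraph obtained from $K_3$ by replacing every edge with a pair of opposite arcs, which has $\fvs=2=\tfrac{2}{3}n$.
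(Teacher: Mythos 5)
Your proposal is correct and follows essentially the same route as the paper: take a minimum feedback arc set $F$, show $\Delta(D[F])\le 2$ by the swap argument of Claim~\ref{claim1} (which indeed uses only $\Delta(D)\le 5$), and then delete a set of vertices meeting every arc of $F$, of size at most $2n/3$ since $UG(D[F])$ has maximum degree $2$. The only cosmetic difference is that you phrase the last step as taking the complement of a largest colour class in a proper $3$-colouring of $UG(D[F])$, whereas the paper takes a minimum vertex cover of $D[F]$ (worst case: disjoint triangles), which is the same bound.
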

\begin{proof}
Without loss of generality, we may assume that $D$ is connected. 
Let $F \subseteq A(D)$ be a minimum feedback arc set of $D$. 
By Claim~\ref{claim1}, we have $\Delta(D[F]) \le 2$. 
Consider a vertex cover of $D[F]$. 
In the worst case, $D[F]$ is a disjoint union of oriented triangles. Therefore, a minimum vertex cover of $D[F]$ has size at most 
$ \frac{2n}{3}$. Since every vertex cover of $D[F]$ is also a feedback vertex set of $D$, we are done.
\end{proof}

This theorem is tight due to a digraph of order $6$ with feedback vertex set equal to 4, see Figure~\ref{digraph5}.
\begin{figure}[htb]
\tikzstyle{vertexX}=[circle, draw, top color=black!20, bottom color=black!10, minimum size=10pt, scale=0.75, inner sep=0.8pt] 
\begin{center}
\begin{tikzpicture}[>=Stealth, thick,
    every node/.style={circle,draw,minimum size=8mm,inner sep=1pt,font=\small}]

\node (v1) at (-3,1.2) {$v_1$};
\node (v2) at (-3,-1.2) {$v_2$};
\node (v3) at (-1.2,0) {$v_3$};

\node (u1) at (3,1.2) {$u_1$};
\node (u2) at (3,-1.2) {$u_2$};
\node (u3) at (1.2,0) {$u_3$};

\foreach \a/\b in {v1/v2, v2/v3, v3/v1}{
  \draw[->,bend left=15] (\a) to (\b);
  \draw[->,bend left=15] (\b) to (\a);
}

\foreach \a/\b in {u1/u2, u2/u3, u3/u1}{
  \draw[->,bend left=15] (\a) to (\b);
  \draw[->,bend left=15] (\b) to (\a);
}

\foreach \a/\b in {1/1,2/2,3/3}{
  \draw[->,thick] (v\a) -- (u\b);
}

\end{tikzpicture}
\end{center}
\caption{a digraph of order $6$ with feedback vertex set equal to 4}
\label{digraph5}
\end{figure}
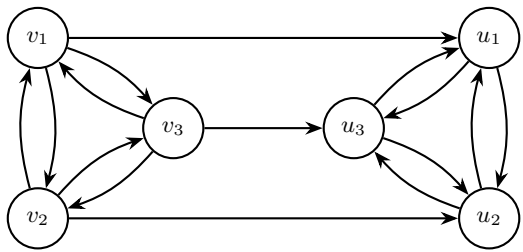

\noindent

\section{Conclusions}

For any integer $k \ge 2$, let $f(k)$ denote the supremum of $\fvs(D)/|V(D)|$ taken over all orgraphs $D$ with maximum degree at most $k$. In this paper we proved that $f(4)=\tfrac{3}{7}$ and $f(5)=\tfrac{1}{2}$. In general, it would be interesting to determine the value of $f(k)$ for larger $k$, or even to understand the asymptotic behaviour of the function $f$. We propose the following conjecture.

\begin{conjecture}
	There exists a constant $\theta>0$ such that for all $k\ge 3$,
	\[
	f(k)\le 1 - \theta\cdot \frac{\log_2 k}{k}.
	\]
\end{conjecture}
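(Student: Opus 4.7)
My plan is to reduce the conjecture to the equivalent statement that every orgraph $D$ on $n$ vertices with $\Delta(D)\le k$ contains an induced acyclic subgraph of size at least $\theta n \log_2 k / k$. The engine of the argument is a local bound, free of charge: since any orgraph on $m$ vertices can be completed to a tournament by orienting the missing pairs, and since the Stearns--Erd\H{o}s--Moser theorem gives an induced acyclic sub-tournament of size at least $\log_2 m$, every orgraph on $m$ vertices contains an induced acyclic subgraph of size at least $\log_2 m$. Applying this with $m=k+1$ inside local neighborhoods is what will provide the $\log_2 k$ factor.

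The first step would be a greedy packing: iteratively select vertices $v_1,v_2,\ldots,v_t$ so that the closed neighborhoods $N[v_1],\ldots,N[v_t]$ are pairwise disjoint. Since $\Delta(D)\le k$, each round removes at most $k+1$ vertices and we obtain $t\ge \lceil n/(k+1)\rceil$ pieces. Inside each piece $D[N[v_i]]$, the local bound above supplies an induced acyclic subset $B_i$ of size at least $\log_2(k+1)$, together with a linear order $\pi_i$ witnessing the acyclicity of $D[B_i]$. If the union $B=\bigcup_i B_i$ were always acyclic in $D$, we would immediately get $\alpha_a(D)\ge t\log_2(k+1)=\Omega(n\log_2 k /k)$, matching the conjecture with $\theta=\Omega(1)$.

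The main obstacle, and where I expect the real difficulty to lie, is that arcs between different pieces can create cycles that traverse several $B_i$'s, which are invisible to the local analysis. My plan to handle this is to carry out the packing so as to produce an acyclic ``quotient structure'' on the pieces: process the strong components of $D$ in a reverse topological order of its condensation and within each strong component select the $v_i$'s greedily in an order compatible with a DFS ordering of that component. Then a candidate global linear order on $B$ is obtained by concatenating the local orders $\pi_i$ in the reverse topological order of the pieces, and the goal becomes to show that every arc of $D[B]$ respects this global order. If the $B_i$ are chosen not just as some acyclic subset but specifically as a transitive sub-tournament ``rooted at'' the exit vertices of $N[v_i]$, this seems plausible, but verifying that one can make these choices consistently across pieces, and that only a constant-fraction loss in either $t$ or $|B_i|$ is incurred, will be the combinatorial crux.

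As a backup strategy, if the deterministic packing loses too much, I would fall back on a probabilistic construction: sample a uniformly random linear order $\pi$ of $V(D)$ and include $v$ in $S$ if a carefully tailored local pattern is met around $v$ (for instance, all in-neighbors before $v$ and at least $\log_2 k$ out-neighbors after $v$ within an $\varepsilon k$-radius window). A Chernoff-type concentration for the retained vertices, combined with an alteration argument that removes a small set hitting the remaining cross-piece cycles, or alternatively a Lov\'asz Local Lemma step applied to the constant-depth dependencies coming from $\Delta\le k$, should yield an induced acyclic subgraph of size $\Omega(n\log_2 k / k)$. In either variant, the balancing act between the probability (or packing density) of retention and the expected number of conflicting short cycles is precisely where the logarithmic saving must be extracted, and this is the step I expect to require the most care.
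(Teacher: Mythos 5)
The statement you are addressing is stated in the paper as an open \emph{conjecture}: the authors give no proof of it, only a construction (random tournaments via Stearns and Erd\H{o}s--Moser) showing that the bound, if true, would be essentially best possible. So there is no proof in the paper to compare against, and your text should be judged as an attempted resolution of an open problem. As written, it is a research plan rather than a proof, and the plan has concrete gaps. First, a technical one: your greedy packing does not do what you claim. Removing $N[v_1]$ (at most $k+1$ vertices) and choosing $v_2$ from the remainder only guarantees $v_2\notin N[v_1]$; the closed neighborhoods $N[v_1]$ and $N[v_2]$ can still share common neighbours. To make the closed neighborhoods pairwise disjoint you must delete second neighborhoods, i.e.\ up to $1+k+k^2$ vertices per round, which degrades the count to $t=\Omega(n/k^2)$ and destroys the $\log_2 k/k$ target. (This is repairable --- the local Stearns--Erd\H{o}s--Moser bound applies to \emph{any} set of $k+1$ vertices, so you could simply partition $V(D)$ into blocks of size $k+1$ --- but that only makes the next problem more visible.)

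The central gap is the gluing step, which you yourself identify as ``the combinatorial crux'' and then do not carry out. The union $B=\bigcup_i B_i$ of locally acyclic sets has no reason to be acyclic, and your proposed remedy --- ordering the pieces by a reverse topological order of the condensation and concatenating the local orders --- is vacuous precisely in the hard case: if $D$ is strongly connected (e.g.\ a $k$-regular strongly connected orgraph, which is where the extremal examples live), the condensation is a single vertex and the device imposes no constraint on arcs between pieces. A DFS order within a strong component does not prevent back-arcs between distinct $B_i$'s from closing cycles. The probabilistic backup is likewise only a sketch; the ``alteration'' or ``Local Lemma'' step that is supposed to kill cross-piece cycles is exactly where a $\log_2 k$ factor must be preserved, and no estimate is given. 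Note that the trivial argument (an independent set of the underlying graph of size $n/(k+1)$ is acyclic) already yields $f(k)\le k/(k+1)$; the conjecture asks for a $\Theta(\log_2 k)$ improvement over this, which is the digraph analogue of the Ajtai--Koml\'os--Szemer\'edi/Shearer improvement for independent sets in triangle-free graphs --- a genuinely deep theorem whose proof relies on machinery (careful local analysis plus entropy/averaging over many steps) that your outline does not engage with. In short, your reduction to ``make the local transitive sets globally consistent'' restates the difficulty of the conjecture rather than resolving it.
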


Recall that for every integer $n$ there exists a tournament $T$ on $n$ vertices whose largest acyclic subtournament has size at most $2\lfloor \log_2 n\rfloor + 1$. Consequently,
\[
\frac{\fvs(T)}{|V(T)|}
\ge
1 - \frac{2\lfloor \log_2 n\rfloor +1}{n}.
\]
Note that $k=n-1$ and $1-\frac{2\lfloor \log_2 (k+1)\rfloor +1}{k+1}\ge 1-\frac{2\log_2 k}{k}$ for $k$ large enough.
Thus, if true, the upper bound in the conjecture would be the best possible. 

We also have the following conjecture for $k=6$. 
\begin{conjecture}
$f(6)=\frac{4}{7}$. 
\end{conjecture}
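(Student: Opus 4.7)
The plan is to prove $f(6)=\tfrac{4}{7}$ by establishing the two matching inequalities separately. For the lower bound I would exhibit the Paley tournament $QR_7$ on $\mathbb{Z}/7\mathbb{Z}$ with arc set $\{ij : j-i \in \{1,2,4\} \pmod 7\}$. It is $3$-regular (hence of maximum degree exactly $6$), and a direct enumeration shows that no $4$-subset is acyclic: in every candidate transitive order on four vertices one of the six pairwise differences must land in the non-residue set $\{3,5,6\}$ and reverses the corresponding arc. Since $\{0,1,2\}$ induces an acyclic subtournament of size $3$, one concludes $\fvs(QR_7)=4$, and the ratio equals $4/7$.

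For the matching upper bound $f(6)\le \tfrac{4}{7}$ the plan is to adapt the weighted-induction template of Theorem~\ref{main4}. Define an exceptional class $\mathcal{H}_6$ of connected orgraphs whose seeds include $QR_7$ (together with any other small orgraphs forced to be locally extremal by the induction, e.g.\ $QR_7$ minus a vertex or $QR_7$ with a single pendant arc) and close it under joining two members by a single bridging arc. Writing $h(D)$ for the number of connected components of $D$ lying in $\mathcal{H}_6$, the strengthened target is
\[
\fvs(D)\le \frac{|V(D)|+|A(D)|+h(D)}{7}.
\]
Because $|A(D)|\le 3|V(D)|$ when $\Delta(D)\le 6$, and because each seed in $\mathcal{H}_6$ should contain at least one vertex of degree at most $5$, summing this bound over components collapses it to $\fvs(D)\le \tfrac{4n}{7}$ exactly as in Corollary~\ref{mainCor}. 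A first step is to prove an analogue of Lemma~\ref{Hlemma}: that $\fvs(H)=(|V(H)|+|A(H)|+1)/7$ for $H\in\mathcal{H}_6$, that every vertex of a seed lies in some minimum feedback vertex set, and that the slack $\sum_{v\in V(H)}(6-d_H(v))$ at each seed is large enough to pay for any prescribed number of bridging arcs.

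The induction then proceeds through analogues of Claims A--P in the proof of Theorem~\ref{main4}: reduce to $2$-edge-connected $D$ with $h(D)=0$, push the minimum degree first to $\Delta(D)-1$ and eventually to $3$-regularity, exclude transitive triangles, diamonds, and short cycles in $UG(D)$, control the number of $3$-cycles through each arc, and produce reducible pairs $(X,Y)$ of carefully chosen sizes (now up to $|X|\in\{4,5\}$) for each remaining configuration. The endgame would be to show that any $3$-regular orgraph avoiding every such reducible configuration must itself be $QR_7$, already in $\mathcal{H}_6$, contradicting the counterexample assumption. The principal obstacle, I expect, is the correct calibration of $\mathcal{H}_6$: for $\Delta\le 4$ the per-vertex slack $4-d(v)$ in Lemma~\ref{Hlemma}(c,d) was generous, so the three seeds $H_1,H_2,H_3$ sufficed, but at $\Delta=6$ the slack is much tighter relative to the density $|A(QR_7)|/|V(QR_7)|=3$ of the extremal seed, and several additional patterns will likely need to be adjoined to $\mathcal{H}_6$ and verified uniformly. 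Compounding this, since $QR_7$ is vertex-transitive with three $3$-cycles through every arc, the reducibility analogues of Claims F, G and L will demand substantially longer case analyses and new reducible pair types than in the $\Delta\le 4$ proof.
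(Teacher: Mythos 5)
The statement you are addressing is a conjecture in the paper, not a theorem: the authors only establish the lower bound $f(6)\ge\frac{4}{7}$, and they do so exactly as you do, via the Paley tournament on $7$ vertices (their argument is the slicker observation that $T_7[N^+(v)]$ is a $3$-cycle for every $v$, so no transitive subtournament on four vertices exists, whence $\fvs(T_7)=4$). Your lower-bound half is therefore fine and matches the paper. The problem is the other half: what you offer for $f(6)\le\frac{4}{7}$ is a research plan, not a proof. Every load-bearing component is left open: the definition and closure properties of the exceptional class $\mathcal{H}_6$, the analogue of Lemma~\ref{Hlemma}, the analogues of Claims A--P (which you yourself say will require ``substantially longer case analyses and new reducible pair types''), and the endgame classification of irreducible $3$-regular orgraphs. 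None of these are carried out, and there is no argument that the discharging with denominator $7$ closes at $\Delta\le 6$, where the slack $6-d(v)$ per low-degree vertex must pay for up to three times as many boundary arcs as in the $\Delta\le 4$ case. A plan with acknowledged unresolved obstacles does not establish the inequality.

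Moreover, one concrete piece of your calibration is already inconsistent. You propose to include $QR_7$ as a seed of $\mathcal{H}_6$ and to prove $\fvs(H)=\frac{|V(H)|+|A(H)|+1}{7}$ for all $H\in\mathcal{H}_6$; but for $QR_7$ this right-hand side is $\frac{7+21+1}{7}=\frac{29}{7}$, which is not even an integer, while $\fvs(QR_7)=4=\frac{7+21}{7}$. So $QR_7$ is not ``exceptional'' in the sense that drove the $+h(D)$ correction in Theorem~\ref{main4} (there the seeds, such as the $3$-cycle with $\fvs=1>\frac{6}{7}$, strictly exceed $\frac{n+m}{7}$), and the exceptional class for $\Delta\le 6$ would have to be built from different, genuinely tight-plus-one configurations, which you have not identified. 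In short: the conjecture remains open, your upper-bound sketch does not prove it, and its stated first step is already false as formulated.
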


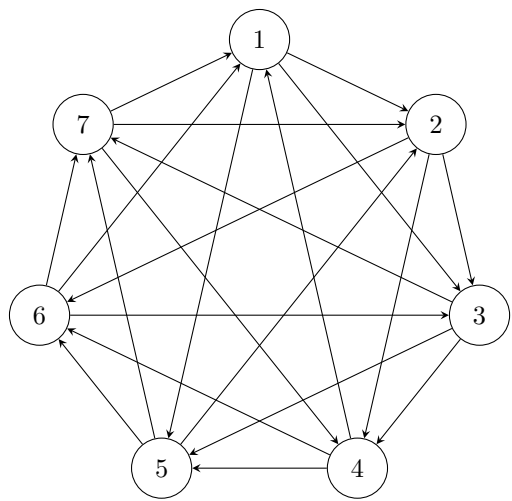
\begin{figure}
\begin{center}
\begin{tikzpicture}[>=stealth,
    every node/.style={circle, draw, minimum size=8mm, inner sep=0pt}]
    \foreach \i in {1,...,7} {
        \node (v\i) at ({90-360/7*(\i-1)}:3cm) {$\i$};
    }

    \foreach \i in {1,...,7} {
        \foreach \d in {1,2,4} {
            \pgfmathtruncatemacro{\j}{mod(\i-1+\d,7)+1}
            \draw[->] (v\i) -- (v\j);
        }
    }
\end{tikzpicture}\label{PalT7}\caption{Paley tournament on 7 vertices}
\end{center}
\end{figure}

Note that $f(6)\geq \frac{4}{7}$ as the minimum feedback vertex set of the following Paley tournament $T_7$ on 7 vertices (see Figure 17) 
has 4 vertices (since $T_7[N^+(v)]$ is a 3-cycle for all $v\in V(T_7)$ and therefore $T_7$ contains no transitive subtournament on four vertices).


\bibliography{bib}

\end{document}